\definecolor{plot_color1}{HTML}{d7191c}
\definecolor{plot_color2}{HTML}{fdae61}
\definecolor{plot_color3}{HTML}{c2a5cf}
\definecolor{plot_color4}{HTML}{abd9e9}
\definecolor{plot_color5}{HTML}{2c7bb6}
\definecolor{intro_color1}{HTML}{1b9e77}
\definecolor{intro_color2}{HTML}{d95f02}
\definecolor{intro_color3}{HTML}{7570b3}
\definecolor{intro_color4}{HTML}{000000}
\numberwithin{equation}{section}
\newcommand{\Div}{\divergence}
\newcommand{\R}{\mathbb R}
\newcommand{\N}{\mathbb N}
\newcommand{\E}{\mathbb E}
\newcommand{\p}{\mathbb P}
\newcommand{\F}{\mathfrak F}
\newcommand{\D}{\mathrm d}
\newcommand{\dd}{\mathrm d}
\newcommand{\dx}{\, \mathrm{d}x}
\newcommand{\ds}{\, \mathrm{d}\sigma}
\newcommand{\dt}{\, \mathrm{d}t}
\newcommand{\dxs}{\,\mathrm{d}x\, \mathrm{d}\sigma}
\newcommand{\dif}{\mathrm{d}}
\newcommand{\mf}{\mathfrak{F}}
\newcommand{\prst}{\mathbb{P}}
\newcommand{\mt}{\mathbb{T}^2}
\newcommand{\tor}{\mathbb{T}^2}
\DeclareMathOperator{\diver}{div}
\newcommand{\grSrc}[1]{#1}
\begin{document}

\title[Navier--Stokes equations with transport noise]{Mean Square Temporal error estimates for the 2D stochastic Navier--Stokes equations with transport noise}

\author{D. Breit, T.C. Moyo}
\address{Mathematical Institute, TU Clausthal, Erzstra\ss e 1, 38678 Clausthal-Zellerfeld, Germany}
\email{dominic.breit@tu-clausthal.de, thamsanqa.castern.moyo@tu-clausthal.de}

\author{A. Prohl}
\address{Mathematisches Institut,
Universit\"at T\"ubingen,
Auf der Morgenstelle 10,
D-72076 T\"ubingen
Germany}
\email{prohl@na.uni-tuebingen.de}

\author{J. Wichmann}
\address{School of Mathematics, Monash University, Australia}
\email{joern.wichmann@monash.edu}
\thanks{JW was partially supported by the Australian Government through the Australian Research Council's Discovery
Projects funding scheme (grant number DP220100937).}

\begin{abstract}
We study the 2D Navier--Stokes equation with transport noise subject to periodic boundary conditions.
Our main result is an error estimate for the time-discretisation showing a convergence rate of order (up to) 1/2. It holds with respect to mean square error convergence, whereas previously such a rate for the stochastic Navier--Stokes equations was only known with respect to convergence in probability. 
Our result is based on uniform-in-probability estimates for the continuous as well as the time-discrete solution exploiting the particular structure of the noise.

Eventually, we perform numerical simulations for the corresponding problem on bounded domains with no-slip boundary conditions. They suggest the same convergence rate as proved for the periodic problem hinging sensitively on the compatibility of the data. We also compare the energy profiles with those for corresponding problems with additive or multiplicative It\^o-type noise.
\end{abstract}

\subjclass[2020]{65N15, 60H15, 76D05, 35R60}
\keywords{Stochastic Navier--Stokes equations, transport noise, time discretization, convergence rates, mean square error}

\date{\today}

\maketitle

%
%
%
%
%
%
%
%
%
%

\section{Introduction}

\textbf{Model and motivation.}
In this paper we study the 2D incompressible Navier-Stokes equations with a suitable multiplicative noise of transport type on the torus $\mt\subset\R^2$. Let $Q:=(0,T)\times\mt$ with $T>0$, the system describes the time evolution of a homogeneous fluid where the unknowns are the velocity field $\bfu: (0,T)\times\mt \to \R^2$ and the pressure $\pi:(0,T)\times\mt\to \R$, and it reads

\begin{align}\label{eq:SNS}
\left\{\begin{array}{rc}
\dd\bfu=\mu\Delta\bfu\dt-(\bfu\cdot\nabla)\bfu\dt-\nabla \pi\dt+\sum_{k=1}^K(\bfsigma_k\cdot\nabla)\bfu\circ\dd W_k
& \mbox{in $ Q$,}\\
\Div \bfu=0\qquad\qquad\qquad\qquad\qquad\,\,\,\,& \mbox{in $Q$,}\\
\bfu(0)=\bfu_0\,\qquad\qquad\qquad\qquad\qquad&\mbox{ \,in $\mt$,}\end{array}\right.
\end{align}
on a filtered probability space $(\Omega,\mathfrak F,(\mathfrak F_t),\p)$. The quantity $\mu>0$ denotes the fluid viscosity and $\bfu_0$ is a given initial datum. Here $(W_k)$ are independent real-valued Wiener processes and $\bfsigma_k\in\R^2$  are constant vectors. 


The motivation for transport noise is twofold:
\begin{itemize}
\item In turbulence theory one
is often confronted with resolved large-scale, slow-varying, unresolved small-scale and  fast varying components of the velocity field. With the aim of modelling these effects Holm derived in \cite{HOLM2,HOLM,HOLM1} models with transport noise in fluid dynamics from a physical perspective.
\item Transport noise can have regularisation effects as demonstrated in \cite{FGP} for the transport equation and very recently in \cite{FL} for the 3D incompressible Navier--Stokes equations. In fact, it is shown in \cite{FL} that the {\it blow up} of the solution can be delayed.
Unfortunately, it does not seem possible to restore uniqueness, cf. \cite{HLP,Pa}.
\end{itemize}
Rigorous derivations of stochastic Navier--Stokes equations with transport noise can be found in \cite{DP,FP1,FP2}.

\textbf{Stochastic forcing.} Most of the literature associated with the stochastic Navier--Stokes equations is concerned with stochastic forcing in the sense of It\^{o}, where the noise can be additive or multiplicative. In the 2D case classical results are given in \cite{Ca} and \cite{CC}, where the existence of a unique pathwise solution is shown. Also the spatial regularity of solutions is well-known and can be proved by deterministic estimates, at least in the case of periodic boundary conditions, see; \emph{e.g.}, \cite{KukShi}. By now many deterministic results on the analysis of Navier--Stokes equations have found their stochastic counterparts, and the literature on the numerical approximation is also growing. First convergence rates for the temporal (and spatio-temporal) approximation of the 2D stochastic Navier--Stokes equations were proved in \cite{CP}. In particular, it was shown
that for any $\xi>0$ and $\alpha<1/4$
\begin{align}\label{eq:perror}
&\mathbb P\bigg[\max_{1\leq m\leq M}\|\bfu(t_m)-\bfu_{m}\|_{L^2_x}^2+\sum_{m=1}^M \Delta t\|\nabla\bfu(t_m)-\nabla\bfu_{m}\|_{L^2_x}^2>\xi\,(\Delta t)^{2\alpha}\bigg]\rightarrow0
\end{align}
as $\Delta t\rightarrow0$. This result was eventually improved in \cite{BrDo} to all $\alpha<1/2$, which seems optimal, given the low regularity of the driving Wiener process. In \eqref{eq:perror} we denote by $\bfu_m$ the solution to the time-discrete problem with discretisation parameter $\Delta t=T/M$, and $t_m=m\Delta t$ for some $M\in\N$ and $m=1,\dots,M$. In this scheme the convective term is treated fully implicitly, whereas the multiplicative It\^{o}-type noise is treated explicitly. Note that \eqref{eq:perror} is an estimate for the error with respect to convergence in probability. It seems an intrinsic feature of SPDEs with non-Lipschitz nonlinearity such as \eqref{eq:SNS} that this is the correct error measure (for first results in this direction see \cite{Pr}) and that a similar result with respect to mean square convergence is in general out of reach. However, a convergence rate in mean square for the 2D stochastic Navier--Stokes equations has been established in \cite{BeMi1} with a logarithmic convergence rate, and in \cite{BeMi2} under an assumption concerning the ratio between the viscosity and the strength of the noise.

\textbf{Transport noise.} From a mathematical point of view the most evident advantage between stochastic forcing and transport noise as in \eqref{eq:SNS} (with solenoidal vector fields $\bfsigma_k$) is that the latter conserves energy of each trajectory whereas the former pushes constantly energy into the system. Under the assumption of constant vector fields the noise does not even effect estimates in higher order Sobolev norms. Hence one easily obtains pathwise estimates; see \eqref{eq:enl} below. As far as the time-discretisation is concerned, it is important that a corresponding algorithm preserves these properties of the continuous solution.
With this in mind (and with $\bfu_0$ given)
we seek $\bfu_{m+1}$ as the solution to
\begin{align}\label{tdiscrA}
\begin{aligned}
\bfu_{m+1}&=\bfu_m +\Delta t\Big(-(\bfu_{m+1/2}\cdot\nabla)\bfu_{m+1/2}+\mu\Delta\bfu_{m+1}\Big)\\&\quad+\sum_{k=1}^K(\bfsigma_k\cdot\nabla)\bfu_{m+1/2}\,\Delta_mW_k,\quad\text{where}\,\,\, \bfu_{m+1/2}:=\tfrac{1}{2}(\bfu_{m+1}+\bfu_m),
\end{aligned}
\end{align}
in $W^{1,2}_{\Div}(\mt)'$, 
where $\Delta_m W=W(t_{m+1})-W(t_{m})$. Note that the noise is treated implicitly which makes the error analysis much more complicated compared to the It\^{o}-case. 
We obtain a discrete counterpart of the uniform-in-probability (higher order) energy estimates, see  \eqref{lem:3.1b} below. Though \eqref{eq:enl} and \eqref{lem:3.1b} are just simple observations exploiting the particular structure of the noise, they have striking consequences: 
we are able to prove the error estimate
\begin{align}\label{eq:thm:4intro}
\begin{aligned}
\E\bigg[\max_{1\leq m\leq M}\|\bfu(t_m)-\bfu_{m}\|_{L^2_x}^2&+\sum_{m=1}^M \Delta t \|\nabla\bfu(t_m)-\nabla\bfu_{m}\|_{L^2_x}^2\bigg)\bigg]\leq \,C\,(\Delta t)^{2\alpha}
\end{aligned}
\end{align}
for any $\alpha<1/2$; see Theorem \ref{thm:3.1} for the precise statement. Estimate \eqref{eq:thm:4intro} yields a convergence rate of order (up to) 1/2 for the mean square error. As explained above in \eqref{eq:perror}, previously such a rate was only known for the error with respect to convergence in probability. In fact, one had to exclude large values of $\bfu$ and $\bfu_m$ (in a certain norm) by either neglecting certain sample sets of small probability \cite{BrDo,CP}, working with stopping times \cite{BrPr1,BrPr2} or truncating the nonlinearity \cite{Pr}. Thanks to our pathwise estimates these large values cannot occur as long as the initial datum is sufficiently regular.

\begin{figure}
\includegraphics[width=0.99\textwidth]{\grSrc{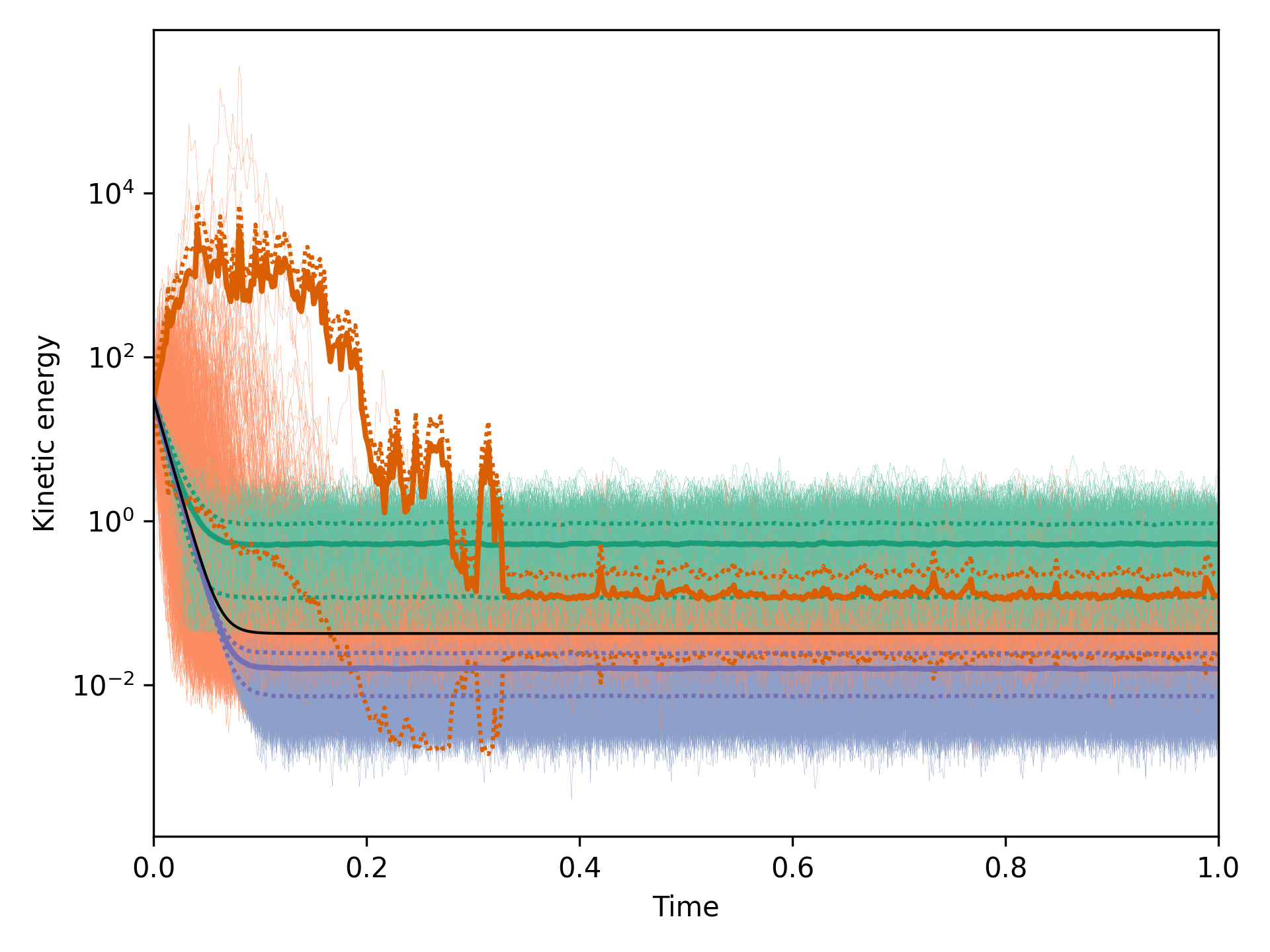}}
\caption{Graphical illustration of the results of SOE--1: time evolution of the  kinetic energy with additive~({\protect\tikz \protect\draw[color=intro_color1, line width=2] (0,0) -- (0.5,0);}), multiplicative~({\protect\tikz \protect\draw[color=intro_color2, line width=2] (0,0) -- (0.5,0);}), transport~({\protect\tikz \protect\draw[color=intro_color3, line width=2] (0,0) -- (0.5,0);}) and no~({\protect\tikz \protect\draw[color=intro_color4, line width=2] (0,0) -- (0.5,0);}) noise. Thick lines and dotted lines show the mean energy and the mean energy plus or minus one standard deviation, respectively. The first 1,000 (out of 10,000) energy trajectories are shown in pale colours. Details on the numerical simulation are given in Section~\ref{sec:numerical-experiments}. }
\label{fig:kinetic}
\end{figure}

\begin{figure}
\includegraphics[width=0.99\textwidth]{\grSrc{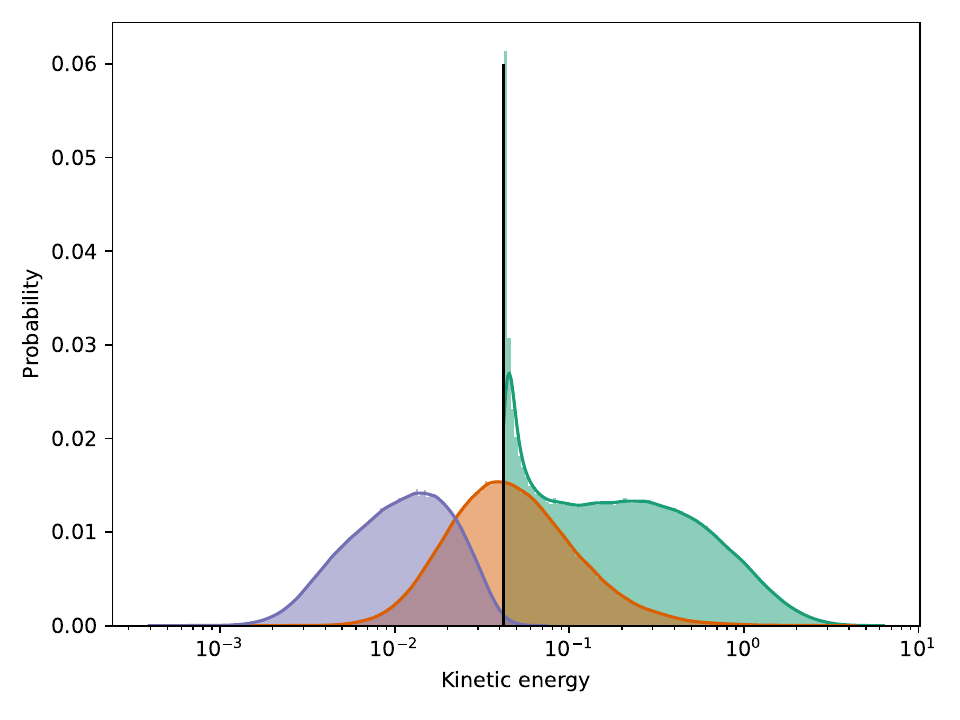}}
\caption{Graphical illustration of the results of SOE--1: empirical approximation (based on 1,000 trajectories) of the stationary distributions of the kinetic energy for additive~({\protect\tikz \protect\draw[color=intro_color1, line width=2] (0,0) -- (0.5,0);}), multiplicative~({\protect\tikz \protect\draw[color=intro_color2, line width=2] (0,0) -- (0.5,0);}) and transport~({\protect\tikz \protect\draw[color=intro_color3, line width=2] (0,0) -- (0.5,0);}) noise. The deterministic stationary energy level is indicated by a black~({\protect\tikz \protect\draw[color=intro_color4, line width=2] (0,0) -- (0.5,0);}) vertical line. Details on the numerical simulation are given in Section~\ref{sec:numerical-experiments}. }
\label{fig:stationary-all}
\end{figure}

\textbf{Numerical simulations.} In Section \ref{sec:numerical-experiments} we perform numerical simulations for the corresponding problem on bounded domains with no-slip boundary conditions and spatially coloured noise (\emph{i.e.}, the driving vector fields $\bfsigma_k$ depend on space). We implement a finite-element based space-time-discretisation in the spirit of
\eqref{tdiscrA}. Again, we must solve a nonlinear problem in each time step to ensure stability of the algorithm.
A key question is if one can expect the same convergence result as for the periodic problem although our theoretical analysis does not apply anymore. Our numerical experiments provide an affirmative answer to this question as long as the data (initial condition and driving vector fields) comply with the constraints, incompressibility and vanishing trace. In fact, our simulations even show that these data conditions are sharp. If initial velocity and noise datum satisfy the constraints, then velocity approximations converge with the theoretically predicted rate from \eqref{eq:thm:4intro}. Conversely, if either initial velocity or noise datum does not satisfy the constraints, then this has a decreasing effect on the convergence rate or it even destroys the convergence. 

Additionally, we investigate the time evolution of the kinetic energy and compare the results for transport noise with those of the same problem with additive noise and multiplicative noise, see Figure~\ref{fig:kinetic}. Different to the latter two cases
transport noise is energy-conservative and thus the kinetic energy is significantly smaller.
In fact,
transport noise even leads to a lower mean energy level compared to the deterministic kinetic energy.
 Initially, dissipation is dominant for transport noise as can be seen by the absence of fluctuations and small standard deviations. At the same time as the deterministic solution reaches its stationary state, randomness becomes more dominant.
The same observation follows from a comparison of the  densities of the stationary distributions, see Figure~\ref{fig:stationary-all}. Additive noise exclusively leads to high energy levels: The density of the stationary distribution attains its maximum at the deterministic stationary energy level, which seems to be a lower bound for each trajectory's energy level. Even if they reach the energy level at an earlier time, they cannot surpass it. Multiplicative noise leads to fairly centred energy levels with occasional exceptions; and the energy of transport noise is mostly concentrated below the deterministic one.

\section{Mathematical framework}
\label{sec:framework}

\subsection{Tensor calculus}

We limit ourselves to 2D space. The system \eqref{eq:SNS} is formulated using standard notations in continuum mechanics. To make the formulation of the problem accessible to all readers we introduce the gradient of a vector field. The gradient of a vector field $\bfu $ is defined to be the second-order tensor
\[
\nabla \bfu = \frac{\partial \bfu}{\partial x_j}\otimes \mathbf{e}_j = \frac{\partial u_i}{\partial x_j}\mathbf{e}_i\otimes \mathbf{e}_j.
\]
In matrix notation,
\begin{equation}\label{eq:grad}
\nabla \bfu=\begin{bmatrix}
\frac{\partial u_1}{\partial x_1}& \frac{\partial u_1}{\partial x_2}\\
\frac{\partial u_2}{\partial x_1}& \frac{\partial u_2}{\partial x_2}
\end{bmatrix}.
\end{equation}
Using \eqref{eq:grad} the equivalence relation $(\nabla \bfu)\bfu =(\bfu\cdot\nabla)\bfu$ is obvious. To distinguish between the products of vectors and matrices we adopt  the following notation. Let $\bfu \in \R^2$ and $\bfv \in \R^2$  be two vectors, then  $\bfu \cdot \bfv$ denotes the product of vectors. Next, for $ \mathbf{A}, \mathbf{B} \in \R^{n\times n}$ we use the notation  $ \mathbf{A}:\mathbf{B} = \mathbf{B}:\mathbf{A} $ to denote $\mathbf{trace}( \mathbf{A}\mathbf{B}^\top )$, that is, the product of matrices.   Let $\mathbf{T}$ be a tensor field, the divergence of a tensor is defined to be the vector
\begin{align}
\mathrm{div}\mathbf{T }= \nabla \mathbf{T}: \mathbb{I}= \frac{\partial \mathbf{T}}{\partial x_i}\mathbf{e}_i = \frac{\partial(T_{jk}\mathbf{e}_j\otimes \mathbf{e}_k)}{\partial x_i}\mathbf{e}_i =\frac{\partial T_{ij}}{\partial x_j}\mathbf{e}_i
\end{align}

For the study of our problem we  need the following identity for tensors
\[
\mathrm{div}(\bfu \otimes \bfv)= (\nabla{\bfu})\bfv+ (\mathrm{div} \,\bfv)\bfu.
\]

\subsection{Function spaces}
All functions spaces are defined over the two-dimensional torus $\mt$ with respect to periodic boundary conditions and zero mean, \emph{i.e.} $\int_{\mt}f\dx=0$. We do not distinguish between scalar- and vector-valued functions. However, vector-valued functions will usually be denoted in bold case.
We denote  by $L^p(\mt)$ and $W^{k,p}(\mt)$ for $p\in[1,\infty]$ and $k\in\mathbb N$, the usual Lebesgue and Sobolev spaces over $\mt$. 
 We consider the subspace
$W^{1,p}_{\Div}(\mt)$ of divergence-free vector fields which is defined accordingly. The space $L^p_{\Div}(\mt)$ is defined as the closure of the set of smooth solenoidal functions in $L^p(\mt).$ We will use the shorthand notations $L^p_x$ and $W^{k,p}_x$ for $L^p(\mt)$ and $W^{k,p}(\mt)$.
For any pair of separable Banach spaces $(X,\|\cdot\|_X)$ and $(Y,\|\cdot\|_Y)$ with $X\subset Y$, we write $X\hookrightarrow Y$ if $X$ is continuously embedded in $Y$, that is $\Vert\cdot\Vert_Y\leq \,C\Vert \cdot\Vert_X$. Here and in what follows, for simplicity, we shall use a generic positive constant $C$ when computing estimates. It can change its value from line to line.

For a separable Banach space $(X,\|\cdot\|_X)$, we denote by $L^p(I;X)$, the set of (Bochner-) measurable functions $u:I\rightarrow X$ such that the mapping $t\mapsto \|u(t)\|_{X}$ belongs to $L^p(I)$. 
The set $C(\overline{I};X)$ denotes the space of functions $u:\overline{I}\rightarrow X$ which are continuous with respect to the norm topology on $(X,\|\cdot\|_X)$. For $\alpha\in(0,1]$ we write
$C^{0,\alpha}(\overline{I};X)$ for the space of H\"older-continuous functions with values in $X$. 
Similarly, for a probability space $(\Omega,\mathfrak F,\p)$ and a separable Banach space $(X,\|\cdot\|_X)$ and $p\in[1,\infty]$
we write $L^p(\Omega,\mathfrak F,\p;X)$ or short
$L^p(\Omega;X)$ for the set of (Bochner-) measurable functions $v:\Omega\rightarrow X$ such that the mapping $\omega\mapsto \|v(\omega)\|_{X}$ belongs to $L^p(\Omega,\mathfrak F,\p)$.

\subsection{Probability setup}

Let $(\Omega,\F,(\F_t)_{t\geq0},\prst)$ be a stochastic basis with a complete, right-continuous filtration and let $(W_k)_{k=1}^K$ be mutually independent real-valued standard Wiener processes relative to $(\F_t)$. We consider smooth solenoidal vector fields $\bfsigma_k:\mt\rightarrow\R^2$.
If $\bfu\in L^2(\Omega;L^2(0,T;W^{1,2}(\mt)))$ is $(\mathfrak F_t)$-adapted the stochastic integrals
\begin{align*}
\int_0^t(\bfsigma_k\cdot\nabla)\bfu \,\D W_k,\quad k=1,\dots, K
\end{align*}
are well-defined in the sense of It\^{o} with values in $L^2(\mt)$. If we only have
$\bfu\in L^2(\Omega;L^2(0,T;L^{2}(\mt)))$ one can use the equality $(\bfsigma_k\cdot\nabla)\bfu=\Div(\bfsigma_k\otimes\bfu)$ and define the stochastic integrals
\begin{align*}
\int_0^t\Div(\bfu\otimes\bfsigma_k) \,\D W_k,\quad k=1,\dots, K
\end{align*}
with values in $W^{-1,2}(\mt)$. 
We define the Stratonovich integrals in \eqref{eq:SNS} by means of the It\^{o}-Stratonovich correction, that is
\begin{align*}
\int_0^t\int_{\mt}\bfu\otimes \bfsigma_k :\nabla\bfphi\dx \,\circ\D W_k&=\int_0^t\int_{\mt}\bfu\otimes \bfsigma_k :\nabla\bfphi\dx \,\D W_k\\&+\frac{1}{2}\Big\langle\Big\langle\int_{\mt}\bfu\otimes \bfsigma_k :\nabla_x\bfphi\dx,W_k\Big\rangle\Big\rangle_t
\end{align*}
for $\bfphi\in W^{1,2}(\mt)$
Here $\langle\langle\cdot,\cdot\rangle\rangle_t$ denotes the cross variation. We compute now the cross variations by means of \eqref{eq:SNS}. We have
\begin{align*}
\int_{\mt}\bfu\otimes \bfsigma_k &:\nabla\bfphi\dx=-\int_{\mt}\bfu_0\otimes \bfsigma_k :\nabla\bfphi\dx+
\int_0^t\int_{\mt}\bfu\otimes\bfu:\nabla(\bfsigma_k\cdot\nabla\bfphi)\dx\,\dif t\\&-\mu\int_0^t\int_{\mt}\nabla\bfu:\nabla(\bfsigma_k\cdot\nabla\bfphi)\dx\,\dif s-\frac{1}{2}\sum_{\ell=1 }^K\int_0^t\nabla(\bfsigma_k\cdot\nabla\bfphi):(\bfsigma_\ell\otimes\bfsigma_\ell\nabla\bfu)\ds\\&
-\sum_{\ell=1}^K\int_0^t\int_{\mt}\bfu\cdot (\bfsigma_\ell \cdot\nabla(\bfsigma_k\cdot\nabla\bfphi))\dx \,\D W_\ell,
\end{align*}
for all $\bfphi\in W^{2,2}(\mt)$.
Here only the last term contributes to the quadratic variation. Plugging the previous considerations together we set
\begin{align} \label{eq:Strato-and-Ito}
&\int_0^t\Div (\bfu\otimes \bfsigma ) \,\circ\D W_k=\int_0^t\Div (\bfu\otimes \bfsigma_k ) \,\D W_k+\frac{1}{2}\int_0^t\Div(\bfsigma_k\otimes\bfsigma_k\nabla\bfu)\ds,
\end{align}
to be understood in $W^{-1,2}(\mt)$ or $W^{-2,2}(\mt)$, depending on the regularity of $\bfu$.

\subsection{The concept of solutions}
\label{subsec:solution}
In two dimensions, pathwise uniqueness for weak solutions is known;
 we refer the reader for instance to \cite{MiRo}. 
Consequently, we may work with the definition of a weak pathwise solution.

\begin{definition}\label{def:inc2d}
Let $(\Omega,\mf,(\mf_t)_{t\geq0},\prst)$ be a given stochastic basis with a complete right-continuous filtration, $(W_k)_{k=1}^K$ being mutually independent real-valued standard Wiener processes relative to $(\F_t)$, and $\bfsigma_k\in\R^2$, $k=1,\dots,K$. 
 Let $\bfu_0$ be an $\mf_0$-measurable random variable. Then $\bfu$ is called a \emph{weak pathwise solution} \index{incompressible Navier--Stokes system!weak pathwise solution} to \eqref{eq:SNS} with the initial condition $\bfu_0$ provided
\begin{enumerate}
\item the velocity field $\bfu$ is $(\mf_t)$-adapted and
$$\bfu \in C([0,T];L^2_{\diver}(\mt))\cap L^2(0,T; W^{1,2}_{\diver}(\tor))\quad\text{$\p$-a.s.},$$
\item the momentum equation
\begin{align}\label{eq:mmnt}
\begin{aligned}
&\int_{\mt}\bfu(t)\cdot\bfvarphi\dx-\int_{\mt}\bfu_0\cdot\bfvarphi\dx
\\&=\int_0^t\int_{\mt}\bfu\otimes\bfu:\nabla\bfvarphi\dx\,\dif t-\mu\int_0^t\int_{\mt}\nabla\bfu:\nabla\bfvarphi\dx\,\dif s\\
&\quad-\sum_{k=1}^K\int_0^t\int_{\mt}\bfu\otimes \bfsigma_k :\nabla \bfvarphi \,\dx\,\D W_k-\frac{1}{2}\sum_{k=1}^K\int_0^t\int_{\mt}\bfsigma_k\otimes\bfsigma_k\nabla\bfu:\nabla \bfvarphi\dx\,\ds
\end{aligned}
\end{align}
holds $\p$-a.s. for all $\bfvarphi\in C^\infty_{\diver}(\mt)$ and all $t\in[0,T]$.
\end{enumerate}
\end{definition}

\begin{theorem}\label{thm:inc2d}
Let $(\Omega,\mf,(\mf_t)_{t\geq0},\prst)$ be a given stochastic basis with a complete right-continuous filtration, $(W_k)_{k=1}^K$ being mutually independent real-valued standard Wiener processes relative to $(\F_t)$, and $\bfsigma_k\in\R^2$, $k=1,\dots,K$. 
 Let $\bfu_0\in L^2_{\Div}(\mt)$.
Then there exists a unique weak pathwise solution to \eqref{eq:SNS} in the sense of Definition \ref{def:inc2d} with the initial condition $\bfu_0$.
\end{theorem}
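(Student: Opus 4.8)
The plan is to construct the solution by a Galerkin approximation, to derive a priori estimates that exploit the energy-preserving structure of the transport noise, to pass to the limit by stochastic compactness so as to obtain a martingale solution, and finally to upgrade this to a pathwise solution via the Gy\"ongy--Krylov characterisation together with the pathwise uniqueness quoted from \cite{MiRo}.

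First I would fix an orthonormal basis $(\bfw_i)_{i\in\N}$ of $L^2_{\Div}(\mt)$ made of (smooth, real) eigenfunctions of the Stokes operator, write $H_N:=\Span\{\bfw_1,\dots,\bfw_N\}$ with orthogonal projection $P_N$, and look for $\bfu^N=\sum_{i=1}^Nc_i^N\bfw_i$ solving the It\^{o} form of the projected equation (equation \eqref{eq:mmnt} tested against functions in $H_N$, i.e.\ including the correction term). This is a finite system of It\^{o} SDEs with locally Lipschitz coefficients, so a unique local strong solution exists and is global once the a priori bound below holds.

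The a priori estimate is the heart of the matter, and here the constancy of the $\bfsigma_k$ is decisive. Applying It\^{o}'s formula to $\tfrac12\|\bfu^N\|_{L^2_x}^2$, the convective term drops by skew-symmetry, while the transport noise contributes nothing on average: integration by parts on the torus gives $\int_{\mt}(\bfsigma_k\cdot\nabla)\bfu^N\cdot\bfu^N\dx=0$ (so the martingale part even vanishes pointwise), and the It\^{o} correction $\tfrac12\int_{\mt}\Div(\bfsigma_k\otimes\bfsigma_k\nabla\bfu^N)\cdot\bfu^N\dx=-\tfrac12\|(\bfsigma_k\cdot\nabla)\bfu^N\|_{L^2_x}^2$ cancels the quadratic-variation term exactly. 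Crucially, since $\bfsigma_k$ is constant the operator $(\bfsigma_k\cdot\nabla)$ maps each Fourier/Stokes mode to a multiple of itself, so $H_N$ is invariant and the cancellation is exact already at the Galerkin level. This yields the pathwise identity
\begin{align*}
\tfrac12\|\bfu^N(t)\|_{L^2_x}^2+\mu\int_0^t\|\nabla\bfu^N\|_{L^2_x}^2\ds=\tfrac12\|P_N\bfu_0\|_{L^2_x}^2,
\end{align*}
the Galerkin analogue of \eqref{eq:enl}, whence uniform bounds for $\bfu^N$ in $L^p(\Omega;L^\infty(0,T;L^2_{\Div}))\cap L^2(\Omega;L^2(0,T;W^{1,2}_{\Div}))$ for every $p<\infty$ and, via the equation, a uniform fractional-in-time bound.

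With these bounds in hand I would prove tightness of the laws of $(\bfu^N,(W_k)_k)$ on $\big(L^2(0,T;L^2_{\Div})\cap C([0,T];W^{-s,2})\big)\times C([0,T];\R^K)$ via Aubin--Lions--Simon, apply the Skorokhod representation theorem to obtain almost surely convergent copies on a new basis, and identify the limit as a martingale solution of \eqref{eq:SNS}. The hard part will be the passage to the limit in the quadratic term $\bfu\otimes\bfu$ --- which requires upgrading weak to strong $L^2_tL^2_x$ convergence through compactness --- performed simultaneously with the identification of the stochastic integral, i.e.\ checking that the reconstructed noise is the correct It\^{o} integral and that recombining it with the correction yields the intended Stratonovich term. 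Finally, since pathwise uniqueness holds by \cite{MiRo} (in 2D it follows from a Gronwall estimate on the difference of two solutions using the Ladyzhenskaya inequality $\|\bfv\|_{L^4}^2\lesssim\|\bfv\|_{L^2_x}\|\nabla\bfv\|_{L^2_x}$, the noise difference again producing no net drift thanks to the constant $\bfsigma_k$), the Gy\"ongy--Krylov theorem promotes the martingale solution to a probabilistically strong solution on the original basis $(\Omega,\mathfrak F,(\mathfrak F_t),\p)$ with datum $\bfu_0$, which completes the proof.
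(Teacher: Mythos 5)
Your proposal is essentially correct, but there is nothing in the paper to compare it against step by step: the authors do not prove Theorem~\ref{thm:inc2d} at all. They treat it as a known result, quoting pathwise uniqueness for 2D weak solutions from \cite{MiRo} and implicitly relying on the classical existence theory; the theorem is stated so that the rest of the paper can work with a weak pathwise solution. What you have written out is precisely the standard construction being invoked: Galerkin approximation, energy estimates, stochastic compactness via Skorokhod, identification of a martingale solution, and Gy\"ongy--Krylov plus 2D pathwise uniqueness to return to the original stochastic basis. Your key structural observation --- that for constant $\bfsigma_k$ the noise contributes nothing to the energy balance, so the Galerkin energy identity is pathwise and the bounds are uniform in $\omega$ --- is exactly the mechanism the paper exploits later for \eqref{eq:en0}--\eqref{eq:enl}, so your argument is in the spirit of the paper even though the paper never spells it out. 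Two refinements are worth recording. First, on a real eigenbasis of the Stokes operator on $\mt$, $(\bfsigma_k\cdot\nabla)$ does not map each mode to a multiple of itself; it rotates the sine/cosine pair within a fixed frequency, so for the invariance $P_N(\bfsigma_k\cdot\nabla)\bfu^N=(\bfsigma_k\cdot\nabla)\bfu^N$ you should take $H_N$ to be a union of complete eigenspaces (a Fourier cutoff). Even if this invariance were dropped, the argument survives: the martingale term still vanishes pointwise since $P_N$ is self-adjoint and $\bfu^N\in H_N$, while the quadratic-variation term $\tfrac12\|P_N(\bfsigma_k\cdot\nabla)\bfu^N\|_{L^2_x}^2$ is dominated by the (unprojected, after testing with $\bfu^N$) correction term $-\tfrac12\|(\bfsigma_k\cdot\nabla)\bfu^N\|_{L^2_x}^2$, yielding an energy inequality, which suffices for the compactness step. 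Second, when identifying the limit you must verify \eqref{eq:mmnt} with the It\^o integral plus the correction term, since that is how the paper \emph{defines} the Stratonovich integral; your plan does exactly this, so no gap arises there.
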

Due to the particular structure of the noise, it is energy conservative and one easily obtains $\p$-a.s. (by applying It\^{o}'s formula to $t\mapsto \frac{1}{2}\|\bfu(t)\|_{L^2_x}^2$)
\begin{align}
\label{eq:en0}\tfrac{1}{2}\|\bfu\|^{2}_{L^{2}_x}+\int_0^t\|\nabla\bfu\|^2_{L^2_x}\dt&=\,\tfrac{1}{2}\|\bfu_0\|^{2}_{L^{2}_x}\quad\forall \,\,0\leq t\leq T.
\end{align}
Note that this equality even holds if the $\bfsigma_k$'s are divergence-free vector fields. For constant vectors $\bfsigma_k$ we even obtain $\p$-a.s.
\begin{align}
\label{eq:en1}\tfrac{1}{2}\|\nabla\bfu\|^{2}_{L^{2}_x}+\int_0^t\|\Delta\bfu\|^2_{L^2_x}\dt&=\,\tfrac{1}{2}\|\nabla\bfu_0\|^{2}_{L^{2}_x}\quad\forall \,\,0\leq t\leq T.
\end{align}
Note that the convective term cancels here due to the periodic boundary conditions (but could also be estimated accordingly by Ladyshenskaya's inequality).
Taking into account \eqref{eq:en0} and \eqref{eq:en1} one can iteratively prove higher order estimates: the stochastic integral always cancels for constant vectors $\bfsigma_k$, while the convective terms can be handled by well-known deterministic estimates. Hence we obtain for any $l\in\N_0$
\begin{align}\label{eq:enl}
\sup_{0\leq t\leq T}\tfrac{1}{2}\|\bfu\|^{2}_{W^{l,2}_x}+\int_0^T\|\bfu\|^2_{W^{l+1,2}_x}\dt&\leq \,C
\end{align}
 $\p$-a.s. with a deterministic constant $C$ depending on $\|\bfu_0\|_{W^{l,2}_x}$,
 provided that $\bfu_0\in W^{l,2}(\mt)$.

\subsection{Pressure decomposition}

 For $\bfphi\in C^\infty(\mt)$ we can insert $\bfphi-\nabla\Delta^{-1}\Div\bfphi$
and obtain
\begin{align}
\nonumber
\int_{\mt}\bfu(t)\cdot\bfvarphi\dx &+\int_0^t\int_{\mt}\mu\nabla\bfu:\nabla\bfphi\dxs-\int_0^t\int_{\mt}\bfu\otimes\bfu:\nabla\bfphi\dxs\\
\label{eq:pressurecon}&=\int_{\mt}\bfu(0)\cdot\bfvarphi\dx
+\int_0^t\int_{\mt}\pi_{\mathrm{det}}\,\Div\bfphi\dxs
\\
\nonumber&\quad-\sum_{k=1}^K\int_0^t\int_{\mt}\bfu\otimes \bfsigma_k :\nabla \bfphi \,\dx\,\D W_k-\frac{1}{2}\sum_{k=1}^K\int_0^t\int_{\mt}\bfsigma_k\otimes\bfsigma_k\nabla\bfu:\nabla\bfphi\,\dx\,\ds\\
\nonumber&\quad+\sum_{k=1}^K\int_0^t\int_{\mt}\Pi_k^{\mathrm{Ito}}\cdot \bfphi \,\dx\,\D W_k+\frac{1}{2}\sum_{k=1}^K\int_0^t\int_{\mt}\Pi_k^{\mathrm{cor}}\cdot\bfphi\,\dx\,\ds,
\end{align}
where 
\begin{align*}
\pi_{\mathrm{det}}&=-\Delta^{-1}\Div\Div\big(\bfu\otimes\bfu\big),\\
\Pi_k^{\mathrm{Ito}}&=-\nabla\Delta^{-1}\Div\Div (\bfu\otimes \bfsigma_k ),\\
\Pi_k^{\mathrm{cor}}&=-\nabla\Delta^{-1}\Div\Div(\bfsigma_k\otimes\bfsigma_k\nabla\bfu).
\end{align*}
This corresponds to the stochastic pressure decomposition from \cite{BrDo} (see also \cite{Br} and \cite[Chap. 3]{Br2}). However, we have additional terms due to the It\^{o}-Stratonovich correction. 

If $\bfu_0\in W^{1,2}_{\Div}(\mt)$, \eqref{eq:en1} yields that equation \eqref{eq:SNS} is satisfied strongly in the analytical sense. That is
we have
\begin{align}\label{eq:strong}
\begin{aligned}
\bfu(t)&=\bfu(0)+\int_0^t\Big[\mu\Delta\bfu-(\nabla\bfu)\bfu
-\nabla\pi_{\mathrm{det}}\Big]\ds\\
&\quad+\sum_{k=1}^K\int_0^t\Div (\bfu\otimes \bfsigma_k ) \,\D W_k+\frac{1}{2}\sum_{k=1}^K\int_0^t\Div(\bfsigma_k\otimes\bfsigma_k\nabla\bfu)\ds\\
&\quad+\sum_{k=1}^K\int_0^t\Pi_k^{\mathrm{Ito}} \,\D W_k+\frac{1}{2}\sum_{k=1}^K\int_0^t\Pi_k^{\mathrm{cor}}\ds
\end{aligned}
\end{align}
$\p$-a.s., for all $t\in[0,T]$, recall equation \eqref{eq:pressurecon}. As in \cite[Cor. 1]{BrDo} we obatin the following estimates for the ``deterministic pressure''.
\begin{corollary}\label{cor:pressure}
Let $l\in\N_0$ and $\bfu_0\in W^{l,2}_{\Div}(\mt)$ and $\bfsigma_k\in\R^2$ for $k=1,\dots, K$. Then we have $\pi_{\mathrm{det}}\in L^2(0,T;W^{l,2}(\mt))$ $\mathbb P$-a.s. and
\begin{align*}
\int_0^T\|\pi_{\mathrm det}\|_{W^{l,2}_x}^2\dt\leq\,C
\end{align*}
uniformly in $\Omega$, where $C$ depends on $\|\bfu_0\|_{W^{l,2}_x}$.
\end{corollary}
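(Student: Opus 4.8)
The plan is to prove Corollary \ref{cor:pressure} by directly exploiting the explicit formula $\pid = -\Delta^{-1}\Div\Div(\bfu\otimes\bfu)$ and controlling it through the elliptic regularity of the operator $\Delta^{-1}\Div\Div$ together with the higher-order energy estimate \eqref{eq:enl}. First I would observe that on the torus $\mt$, the composite operator $\Delta^{-1}\Div\Div$ is a zeroth-order Fourier multiplier (each $\Div$ contributes one derivative, so two derivatives are exactly cancelled by $\Delta^{-1}$), hence bounded on every $W^{l,2}(\mt)$ by the Mikhlin--H\"ormander multiplier theorem (or, more elementarily, by a direct Fourier-series estimate since the symbol $\xi_i\xi_j/|\xi|^2$ is bounded uniformly for $\xi\neq0$). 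This reduces the task to bounding $\|\bfu\otimes\bfu\|_{W^{l,2}_x}$.

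Next I would estimate the tensor product $\bfu\otimes\bfu$ in $W^{l,2}(\mt)$ pointwise in $\Omega$. By the algebra property of $W^{l,2}(\mt)$ for $l$ large enough (or by the standard Gagliardo--Nirenberg/Moser-type product estimates in two dimensions for small $l$), one has $\|\bfu\otimes\bfu\|_{W^{l,2}_x}\leq C\|\bfu\|_{W^{l,2}_x}\|\bfu\|_{W^{l,2}_x}$ whenever $W^{l,2}(\mt)\hookrightarrow L^\infty(\mt)$, i.e. for $l\geq1$ in dimension two (up to borderline care, which the Sobolev embedding $W^{1,2}\hookrightarrow L^p$ for all finite $p$ resolves). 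For the case $l=0$ one simply uses $\|\bfu\otimes\bfu\|_{L^2_x}\leq\|\bfu\|_{L^4_x}^2$ and the embedding $W^{1,2}(\mt)\hookrightarrow L^4(\mt)$. Combining these gives, $\p$-a.s.,
\begin{align*}
\|\pid(t)\|_{W^{l,2}_x}^2\leq C\|\bfu(t)\|_{W^{l+1,2}_x}^2\,\|\bfu(t)\|_{W^{l+1,2}_x}^2\leq C\Big(\sup_{0\leq s\leq T}\|\bfu(s)\|_{W^{l+1,2}_x}^2\Big)\|\bfu(t)\|_{W^{l+1,2}_x}^2.
\end{align*}

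Finally I would integrate this estimate in time over $(0,T)$ and invoke \eqref{eq:enl}. The supremum factor $\sup_{0\leq s\leq T}\|\bfu(s)\|_{W^{l+1,2}_x}^2$ is bounded by a deterministic constant $C$ depending only on $\|\bfu_0\|_{W^{l+1,2}_x}$ \emph{uniformly in} $\Omega$ — this is precisely the pathwise nature of \eqref{eq:enl}, where the stochastic integral cancels for constant $\bfsigma_k$ — while $\int_0^T\|\bfu(t)\|_{W^{l+1,2}_x}^2\dt$ is likewise bounded uniformly in $\Omega$ by the same estimate. Therefore $\int_0^T\|\pid\|_{W^{l,2}}^2\dt\leq C$ uniformly in $\Omega$, with $C$ depending on $\|\bfu_0\|_{W^{l+1,2}_x}$, which yields the claim after absorbing the regularity index shift. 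The main obstacle is not any single hard estimate but rather the careful bookkeeping of the regularity index: the product estimate for $\bfu\otimes\bfu$ in $W^{l,2}$ costs essentially no derivatives in the algebra regime but does require the $L^\infty$-type control provided by one extra derivative, so one must check that the budget $\|\bfu\|_{W^{l+1,2}}$ furnished by \eqref{eq:enl} suffices — which it does, since the energy estimate is stated with the gain of one spatial derivative in the dissipative term.
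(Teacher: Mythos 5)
Your overall strategy --- boundedness of $\Delta^{-1}\Div\Div$ as a zeroth-order Fourier multiplier on $W^{l,2}(\mt)$, a product estimate for $\bfu\otimes\bfu$, and the pathwise estimate \eqref{eq:enl} to get uniformity in $\Omega$ --- is the intended one (the paper itself gives no proof but defers to \cite[Cor.~1]{BrDo}, which runs along exactly these lines), and your point that uniformity in $\Omega$ is free because \eqref{eq:enl} holds pathwise is correct. However, there is a genuine gap in your key displayed inequality: you bound $\|\bfu\otimes\bfu\|_{W^{l,2}_x}\leq C\|\bfu\|_{W^{l+1,2}_x}^2$ and then pull out the factor $\sup_{0\leq s\leq T}\|\bfu(s)\|_{W^{l+1,2}_x}^2$. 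That supremum is controlled by \eqref{eq:enl} only at level $l+1$, i.e.\ it requires $\bfu_0\in W^{l+1,2}_{\Div}(\mt)$ and produces a constant depending on $\|\bfu_0\|_{W^{l+1,2}_x}$. The corollary assumes only $\bfu_0\in W^{l,2}_{\Div}(\mt)$ and asserts $C=C(\|\bfu_0\|_{W^{l,2}_x})$; under that hypothesis the quantity $\sup_t\|\bfu(t)\|_{W^{l+1,2}_x}$ need not even be finite, since \eqref{eq:enl} at level $l$ gives only $L^2$-in-time control of the $W^{l+1,2}_x$ norm through the dissipative term. There is no ``absorbing the regularity index shift'': as written, you prove a strictly weaker statement, with a loss of one derivative on the initial datum.

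The repair is to split the two factors of $\bfu\otimes\bfu$ asymmetrically, so that only one factor sits at level $l+1$ and that factor is the one integrated in time. In two dimensions, the Leibniz rule combined with Ladyshenskaya's inequality $\|D^j\bfu\|_{L^4_x}\leq C\|\bfu\|_{W^{j,2}_x}^{1/2}\|\bfu\|_{W^{j+1,2}_x}^{1/2}$ applied to each term $D^j\bfu\otimes D^{l-j}\bfu$ gives
\begin{align*}
\|\bfu\otimes\bfu\|_{W^{l,2}_x}\leq C\,\|\bfu\|_{W^{l,2}_x}\|\bfu\|_{W^{l+1,2}_x},
\end{align*}
which for $l=0$ is just $\|\bfu\|_{L^4_x}^2\leq C\|\bfu\|_{L^2_x}\|\bfu\|_{W^{1,2}_x}$. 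Consequently, pathwise in $\Omega$,
\begin{align*}
\int_0^T\|\pid\|_{W^{l,2}_x}^2\dt\leq C\Big(\sup_{0\leq t\leq T}\|\bfu(t)\|^2_{W^{l,2}_x}\Big)\int_0^T\|\bfu\|^2_{W^{l+1,2}_x}\dt\leq C\big(\|\bfu_0\|_{W^{l,2}_x}\big),
\end{align*}
where now \emph{both} factors are controlled by \eqref{eq:enl} at level $l$, exactly as the corollary states. Two smaller inaccuracies should also be fixed: in two dimensions $W^{l,2}(\mt)\hookrightarrow L^\infty(\mt)$ requires $l\geq2$, not $l\geq1$, and $W^{1,2}(\mt)$ is \emph{not} an algebra, so the case $l=1$ cannot be rescued by the embedding $W^{1,2}\hookrightarrow L^p$ for finite $p$; it genuinely needs the asymmetric estimate above.
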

One easily derives the following estimates
from the definitions of $\Pi_k^{\mathrm{Ito}}$ and $\Pi_k^{\mathrm{cor}}$ and the smoothness of the $\bfsigma_k$:
\begin{align*}
\|\Pi_k^{\mathrm{Ito}}\|_{W^{l,2}_x}\leq\,C\|\bfu\|_{W^{l+1,2}_x},\quad \|\Pi_k^{\mathrm{cor}}\|_{W^{l,2}_x}\leq\,C\|\bfu\|_{W^{l+2,2}_x},
\end{align*}
for $l=0,1,2$, where $W^{0,2}(\mt)=L^2(\mt)$.
Hence the stochastic terms in \eqref{eq:strong} can be controlled by
\begin{align*}
\E\bigg[ &\bigg\|\int_0^{\cdot}\Big(\Div (\bfu\otimes \bfsigma_k )+\Pi_k^{\mathrm{Ito}}\Big) \,\D W_k\bigg\|_{C^{\alpha}([0,T];W^{l,2}_x)}^{r}\bigg]\\&\leq\,C\,\E\bigg[ \int_0^T\Big\|\Big(\Div (\bfu\otimes \bfsigma_k )+\Pi_k^{\mathrm{Ito}}\Big) \Big\|^r_{W_x^{l,2}}\dt \bigg]\\&\leq\,C\,\E\bigg[\int_0^T\|\bfu\|^r_{W_x^{l+1,2}}\dt \bigg],\\
\E\bigg[&\bigg\|\int_0^{\cdot}\Big(\Div (\bfsigma_k\otimes \bfsigma_k\nabla\bfu )+\Pi_k^{\mathrm{cor}}\Big) \ds\bigg\|_{C^{\alpha}([0,T];W^{l,2}_x)}^{r}\bigg]\\&\leq\,C\,\E\bigg[\bigg\|\int_0^{\cdot}\Big(\Div (\bfsigma_k\otimes \bfsigma_k\nabla\bfu )+\Pi_k^{\mathrm{cor}}\Big) \ds\bigg\|_{W^{1,2}([0,T];W^{l,2}_x)}^{r}\bigg]\\&\leq\,C\,\E\bigg[\bigg(\int_0^T\|\bfu\|^2_{W_x^{l+2,2}}\dt\bigg)^{\frac{r}{2}}\bigg],
\end{align*}
where $r>2$ and $\alpha\in(0,\frac{1}{2}-\frac{1}{r})$.
Combining this with (\ref{eq:enl}) and Corollary \ref{cor:pressure} we obtain
the following result on 
the time regularity of the velocity field arguing similarly to  \cite[Cor. 2]{BrDo}.
\begin{corollary}\label{cor:uholder}
Suppose that $\bfu_0\in W^{l,2}_{\Div}(\mt)$ for some $l\in \N$ and $\bfsigma_k\in\R^2$ for $k=1,\dots, K$. Then we have
\begin{align}
\label{eq:holder}
\E\Big[\|\bfu\|_{C^\alpha([0,T];W^{l-1,2}_x)}^{\frac{r}{2}} \Big]<\infty,
\end{align}
for all $r > 2$ and  $\alpha<\frac{1}{2} - \frac{1}{r}$.
\end{corollary}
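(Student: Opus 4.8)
The plan is to establish the time-H\"older regularity \eqref{eq:holder} by decomposing the evolution \eqref{eq:strong} into its deterministic drift part and its stochastic (plus correction) part, and to bound each contribution separately using the uniform-in-probability energy estimates \eqref{eq:enl} together with Corollary \ref{cor:pressure}. Specifically, writing $\bfu(t)-\bfu(s)$ as the sum of the time-integral of the drift $\mu\Delta\bfu-(\nabla\bfu)\bfu-\nabla\pi_{\mathrm{det}}$ over $[s,t]$, the It\^o stochastic integral involving $\Div(\bfu\otimes\bfsigma_k)+\Pi_k^{\mathrm{Ito}}$, and the bounded-variation correction term involving $\Div(\bfsigma_k\otimes\bfsigma_k\nabla\bfu)+\Pi_k^{\mathrm{cor}}$, I would show each of these three pieces lies in $C^\alpha([0,T];W^{l-1,2}_x)$ with finite $\tfrac{r}{2}$-th moment.

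First I would treat the drift term. Since each of $\mu\Delta\bfu$, $(\nabla\bfu)\bfu$, and $\nabla\pi_{\mathrm{det}}$ lies in $L^2(0,T;W^{l-1,2}_x)$ with a bound uniform in $\Omega$ --- this follows from \eqref{eq:enl} for the Laplacian and convective terms (using that $W^{l+1,2}_x\hookrightarrow W^{l-1,2}_x$ controls $\Delta\bfu$, and a product/interpolation estimate controls $(\nabla\bfu)\bfu$ in $W^{l-1,2}_x$) and from Corollary \ref{cor:pressure} for the pressure gradient --- the map $t\mapsto\int_0^t[\cdots]\ds$ is automatically in $W^{1,2}([0,T];W^{l-1,2}_x)$, hence in $C^{1/2}([0,T];W^{l-1,2}_x)$ by Sobolev embedding in time, with the required moment bound. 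For the stochastic integral part I would invoke the Kolmogorov-type continuity estimate for stochastic integrals already displayed in the excerpt, which yields
\begin{align*}
\E\bigg[\bigg\|\int_0^{\cdot}\big(\Div(\bfu\otimes\bfsigma_k)+\Pi_k^{\mathrm{Ito}}\big)\,\D W_k\bigg\|_{C^\alpha([0,T];W^{l-1,2}_x)}^{r}\bigg]\leq C\,\E\bigg[\bigg(\int_0^T\|\bfu\|_{W^{l,2}_x}^2\dt\bigg)^{r/2}\bigg],
\end{align*}
and the right-hand side is finite by \eqref{eq:enl} applied with the index $l$; here I would choose $r>2$ and $\alpha\in(\tfrac1r,\tfrac12)$ so that the Burkholder--Davis--Gundy and factorization arguments apply. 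The correction term is handled as in the second displayed chain: it is of bounded variation in time with values in $W^{l-1,2}_x$, controlled by $\int_0^T\|\bfu\|_{W^{l+1,2}_x}^2\dt$, which is again finite by \eqref{eq:enl}.

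Finally I would assemble the three estimates: the $C^\alpha$-seminorm of $\bfu$ in $W^{l-1,2}_x$ is bounded by the sum of the three pieces, and taking $\tfrac{r}{2}$-th moments and using the elementary inequality $(a+b+c)^{r/2}\lesssim a^{r/2}+b^{r/2}+c^{r/2}$ delivers \eqref{eq:holder}. To obtain the claimed range of exponents --- all $\alpha<\tfrac12$ and all $r>1$ --- I would run the stochastic-integral estimate for large $r$ (which is permitted since \eqref{eq:enl} gives moments of every order, being in fact a pathwise bound) and then lower $r$ by H\"older's inequality, while letting $\alpha\uparrow\tfrac12$. The main obstacle, and the step requiring the most care, is verifying the stochastic-integral H\"older estimate at the correct Sobolev level and checking that the nonlinear convective term $(\nabla\bfu)\bfu$ genuinely lies in $W^{l-1,2}_x$ uniformly --- this rests on the algebra/product structure of Sobolev spaces on $\mt$ and on the fact that \eqref{eq:enl} furnishes control one derivative higher than needed; everything else is a routine application of the uniform bounds already available in the excerpt.
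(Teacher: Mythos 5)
Your proposal is correct and follows essentially the same route as the paper: the paper obtains the corollary precisely by combining the strong formulation \eqref{eq:strong} with the pathwise bounds \eqref{eq:enl}, Corollary \ref{cor:pressure} for the pressure, and the displayed $C^\alpha$-moment estimates for the stochastic integral and the Stratonovich correction (citing \cite[Cor.~2]{BrDo} for the assembly). Your filling-in of the drift estimate via $L^2_t W^{l-1,2}_x \Rightarrow C^{1/2}_t$, the index bookkeeping at level $W^{l-1,2}_x$, and the large-$r$/H\"older reduction to all $r>1$ and $\alpha<\tfrac12$ matches what the paper leaves implicit.
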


\section{Time-discretization}
\subsection{Preparations}
We consider an equidistant partition of $[0,T]$ with mesh size $\Delta t=T/M$ and set $t_m=m\Delta t$. 
Let $\bfu_0$ be an $\mathfrak F_0$-measurable random variable with values in $W^{1,2}_{\Div}(\mt)$.
We aim at constructing iteratively a sequence of $\mathfrak F_{t_{m+1}}$-measurable random variables $\bfu_{m+1}$ with values in $W^{1,2}_{\Div}(\mt)$ such that
for every $\bfphi\in W^{1,2}_{\Div}(\mt)$ it holds true $\p$-a.s.
\begin{align}\label{tdiscr}
\begin{aligned}
\int_{\mt}&\bfu_{m+1}\cdot\bfvarphi \dx +\Delta t\bigg(\int_{\mt}(\nabla\bfu_{m+1/2})\bfu_{m+1/2}\cdot\bfphi\dx+\mu\int_{\mt}\nabla\bfu_{m+1}:\nabla\bfphi\dx\bigg)\\
&\qquad=\int_{\mt}\bfu_{m}\cdot\bfvarphi \dx+\sum_{k=1}^K\int_{\mt}(\bfsigma_k\cdot\nabla)\bfu_{m+1/2}\,\Delta_mW_k\cdot \bfvarphi\dx,
\end{aligned}
\end{align}
where $\Delta_m W=W(t_{m+1})-W(t_{m})$. The existence of $\bfu_{m+1}$ can be shown via a standard fixed point argument: rewriting \eqref{tdiscr} in the unknown $\bfu_{m+1/2}:=\frac{1}{2}(\bfu_{m+1}+\bfu_m)$, the nonlinearity as well as the noise-term cancel when testing with $ \bfu_{m+1/2}$ such that the classical Browder theorem applies and we obtain
$\bfu_{m+1/2}$ given for $\bfu_m$ and $\Delta_m W$. Finally we set $\bfu_{m+1}:=2\bfu_{m+1/2}- \bfu_m$. Its $\mathfrak F_{t_{m+1}}$-measurability follows from the continuity of the fixed point map.
Testing \eqref{tdiscr} with $\bfu_{m+1/2}$ we observe that both, the convective term and the noise term disappear provided $\Div\bfsigma_k=0$ (here it is  not necessary to take them constant). We obtain
\begin{align*}
\tfrac{1}{2}\|\bfu_{m+1}\|^{2}_{L^{2}_x}+\tfrac{1}{4}\mu\Delta t\|\nabla\bfu_{m+1}\|^{2}_{L^{2}_x} + \mu\Delta t\|\nabla\bfu_{m+1/2}\|^{2}_{L^{2}_x}=\tfrac{1}{2}\|\bfu_m\|^{2}_{L^{2}_x} + \tfrac{1}{4}\mu\Delta t\|\nabla\bfu_{m}\|^{2}_{L^{2}_x},
\end{align*}
where we used the matrix identities 
\begin{align*}
\nabla \bfu_{m+1}: \nabla \bfu_{m+\frac{1}{2}} &= \abs{\nabla \bfu_{m+\frac{1}{2}}}^2 +  \frac{1}{4}\big(\nabla  \bfu_{m+1} - \nabla \bfu_{m} \big) :\big(\nabla  \bfu_{m+1} + \nabla \bfu_{m} \big) \\
&=  \abs{\nabla \bfu_{m+\frac{1}{2}}}^2 +  \frac{1}{4}\big( \abs{\nabla \bfu_{m+1}}^2 - \abs{\nabla \bfu_{m}}^2 \big). 
\end{align*}
After iteration, we find that
\begin{align*}
\tfrac{1}{2}\|\bfu_{m+1}\|^{2}_{L^{2}_x}+\tfrac{1}{4}\mu\Delta t\|\nabla\bfu_{m+1}\|^{2}_{L^{2}_x}+\mu\Delta t\sum_{n=0}^m\|\nabla\bfu_{n+1/2}\|^{2}_{L^{2}_x}=\tfrac{1}{2}\|\bfu_0\|^{2}_{L^{2}_x} +  \tfrac{1}{4}\mu\Delta t\|\nabla\bfu_{0}\|^{2}_{L^{2}_x}.
\end{align*}
In particular, we have
\begin{align}
\label{lem:3.1a0}
\begin{aligned}
\max_{1\leq m\leq M}\tfrac{1}{2}\|\bfu_m\|^{2}_{L^{2}_x}&+ \max_{1\leq m\leq M}\tfrac{1}{4}\mu\Delta t\|\nabla\bfu_{m}\|^{2}_{L^{2}_x} +\mu\Delta t\sum_{m=1}^M\|\nabla\bfu_{m+1/2}\|^2_{L^2_x}\\
&\leq \frac{3}{2} \left( \|\bfu_0\|^{2}_{L^{2}_x} +\tfrac{1}{4}\mu\Delta t\|\nabla\bfu_{0}\|^{2}_{L^{2}_x} \right) .
\end{aligned}
\end{align}
If the $\bfsigma_k$ are constant we can argue similarly for the test with $\Delta\bfu_{m+1/2}$, obtaining
\begin{align}\label{lem:3.1a}
\begin{aligned}
\max_{1\leq m\leq M}\|\nabla\bfu_m\|^{2}_{L^{2}_x} &+ \max_{1\leq m\leq M} \mu\Delta t\|\nabla^2 \bfu_{m}\|^{2}_{L^{2}_x} +\Delta t\mu\sum_{m=1}^M\|\nabla^2 \bfu_{m+1/2}\|^2_{L^2_x}\\
&\leq \frac{3}{2} \left( \|\nabla \bfu_0\|^{2}_{L^{2}_x} +\tfrac{1}{4}\mu\Delta t\|\nabla^2\bfu_{0}\|^{2}_{L^{2}_x} \right).
\end{aligned}
\end{align}
provided $\bfu_0\in W^{2,2}_{\Div}(\mt)$. One can obtain higher order estimates in the same manner: the noise cancels, while the convective term can be estimated (using \eqref{lem:3.1a0} and \eqref{lem:3.1a}) by well-known deterministic estimates. Hence we have for any $l\in\N_0$
\begin{align}
\label{lem:3.1b}\max_{1\leq m\leq M}\|\bfu_m\|^{2}_{W^{l,2}_x}+ \max_{1\leq m\leq M} \mu\Delta t\|\bfu_m\|^{2}_{W^{l+1,2}_x} + \mu\Delta t\sum_{m=1}^M\|\bfu_{m+1/2}\|^2_{W^{l+1,2}_x}&\leq\,C
\end{align}
with $C$ depending on $\|\bfu_0\|_{W^{l+1,2}_x}$, provided that $\bfu_0\in W^{l+1,2}_{\Div}(\mt)$. 

\subsection{Temporal error estimate}
In the following we formulate our main result.
\begin{theorem}\label{thm:3.1}
Assume that $\bfu_0\in W^{4,2}_{\Div}(\mt)$ and $\bfsigma_k\in\R^2$ for $k=1,\dots, K$. Let $\bfu$ and $(\bfu_m)_{m=1}^M$ be the unique strong solution
to \eqref{eq:SNS} in the sense of Definition \ref{def:inc2d} and the solution to \eqref{tdiscr}, respectively. Suppose that we have
\begin{align} \label{ass:vis-dom-noise}
\Gamma_\bfsigma:= \frac{9}{8}\sum_{k=1}^K |\bfsigma_k|^2 < \mu.
\end{align}
Then the following error estimate is satisfied: for any~$\alpha < 1/2$ there exists~$C>0$ (depending on $\alpha$, $T$, $\|\bfu_0\|_{W^{4,2}_x}$, $\mu$, $\Gamma_\bfsigma$, $K$) such that
\begin{align}\label{eq:thm:4}
\begin{aligned}
\E\bigg[\max_{1\leq m\leq M}\|\bfu(t_m)-\bfu_{m}\|^2_{L^2_x}&+\Delta t\sum_{m=1}^M  \|\nabla\bfu(t_m)-\nabla\bfu_{m}\|^2_{L^2_x}\bigg]\leq \,C\,(\Delta t)^{2\alpha}.
\end{aligned}
\end{align} 
\end{theorem}
The proof of this theorem uses the error at integer times $\bfe_{m+1} := \bfu(t_{m+1}) - \bfu_{m+1}$ rather than the arithmetic mean of consecutive errors~$\bfe_{m+1/2}:= \tfrac{1}{2}(\bfe_{m+1} + \bfe_{m})$, which is in contrast to the derivation of the discrete stability that relies on the arithmetic mean (see e.g. Equation~\eqref{lem:3.1a0}); this approach in combination with the proper balancing of noise and diffusion (see Condition~\eqref{ass:vis-dom-noise}) then allows control of the stochastic terms.
\begin{proof}
Let $m \in \{0,\ldots,M-1\}$. Subtracting  \eqref{tdiscr} from \eqref{eq:mmnt} yields the local error equation
\begin{align*}
\begin{aligned}
&\int_{\mt}\bfe_{m+1}\cdot\bfvarphi \dx +\int_{t_{m}}^{t_{m+1}}\bigg(\int_{\mt} \Big((\bfu(t)\cdot\nabla)\bfu(t)-(\bfu_{m+\frac{1}{2}}\cdot\nabla)\bfu_{m+\frac{1}{2}} \Big)\cdot \bfvarphi\dx \bigg) \dt\\
&\qquad \qquad +\mu\int_{t_{m}}^{t_{m+1}}\int_{\mt}\nabla (\bfu(t)-\bfu_{m+1}):\nabla\bfphi\dx\dt\\
&=\int_{\mt}\bfe_{m}\cdot\bfvarphi \dx
+\sum_{k=1}^K\int_{\mt}\left(\int_{t_{m}}^{t_{m+1}}(\bfsigma_k\cdot \nabla )\bfu \circ\,\dd W_k-(\bfsigma_k\cdot\nabla)\bfu_{m+\frac{1}{2}}\Delta_m W_k\right)\cdot \bfvarphi\dx,
\end{aligned}
\end{align*}
which holds true $\p$-a.s. for any $\bfphi\in W^{1,2}_{\Div}(\mt)$. Slightly reorganizing the equation shows
\begin{align*}
&\int_{\mt}\big( \bfe_{m+1} - \bfe_{m} \big) \cdot\bfvarphi \dx +  \mu \Delta t \int_{\mt} \nabla \bfe_{m+1} : \nabla \bfvarphi \dx \\
&\qquad = \int_{\mt} \underbrace{\mu \int_{t_{m}}^{t_{m+1}} \big(\nabla\bfu(t_{m+1})-\nabla\bfu(t)\big) \dt}_{=:I_1(m)} :\nabla \bfvarphi \dx
\\
&\qquad \qquad+\int_{\mt} \underbrace{\int_{t_{m}}^{t_{m+1}} \Big((\bfu_{m+\frac{1}{2}}\cdot\nabla)\bfu_{m+\frac{1}{2}}-(\bfu(t)\cdot\nabla)\bfu(t)\Big) \dt }_{=:I_2(m)} \cdot \, \bfvarphi \dx \\
&\qquad\qquad+\int_{\mt} \underbrace{\sum_{k=1}^K\left(\int_{t_{m}}^{t_{m+1}}(\bfsigma_k\cdot \nabla )\bfu \circ\,\dd W_k-(\bfsigma_k\cdot\nabla )\bfu_{m+\frac{1}{2}}\Delta_m W_k\right) }_{=:I_3(m)} \cdot \, \bfvarphi \dx.
\end{align*}
Next, we choose~$\bfvarphi = \bfe_{m+1} \in W^{1,2}_{\Div}(\mt)$, which implies
\begin{align} \label{eq:local-error-id}
\begin{aligned}
&\int_{\mt}\big( \bfe_{m+1} - \bfe_{m} \big) \cdot \bfe_{m+1} \dx +  \mu \Delta t \int_{\mt} \nabla \bfe_{m+1} : \nabla \bfe_{m+1} \dx \\
&\qquad =  \int_{\mt} I_1(m) : \nabla \bfe_{m+1} \dx + \int_{\mt} I_2(m) \cdot \bfe_{m+1} \dx  + \int_{\mt} I_3(m) \cdot  \bfe_{m+1} \dx.
\end{aligned}
\end{align}
Identity~\eqref{eq:local-error-id} is a time local error identity, which forms the starting point of our error analysis. Before we move on to obtain a time global error estimate (by summing the local error identity for different time instances), we rewrite the first term on the left-hand-side in the desired form of our claimed Inequality~\eqref{eq:thm:4}. To do so, let us recall the following algebraic identity that is valid for all inner-products (we present it here for vectors $\bfa,\bfb\in\R^2$): 
\begin{align*} 
(\bfa-\bfb) \cdot \bfa &=\frac{1}{2}\left(|\bfa|^2-|\bfb|^2+|\bfa-\bfb|^2\right).
\end{align*}
Using this identity for $\bfa = \bfe_{m+1}$ and $\bfb = \bfe_m$, we find that
\begin{align*}
\int_{\mt}\big( \bfe_{m+1} - \bfe_{m} \big) \cdot \bfe_{m+1} \dx = \frac{1}{2}\left( \norm{\bfe_{m+1}}_{L^2_x}^2 -\norm{\bfe_{m}}_{L^2_x}^2 \right) + \frac{1}{2} \norm{\bfe_{m+1} - \bfe_{m}}_{L^2_x}^2.
\end{align*}

We are ready to move on to the time global error analysis: Let $n \in \{1,\ldots, M\}$. We sum the local error equation (Equation~\eqref{eq:local-error-id}) for $m \in \{0, \ldots, n-1\}$. Due to the telescopic structure of the terms on the left-hand-side and using~$\bfe_0 = 0$, we find that
\begin{align*}
&\frac{1}{2} \norm{\bfe_n}_{L^2_x}^2 + \frac{1}{2} \sum_{m=0}^{n-1} \norm{\bfe_{m+1} - \bfe_{m}}_{L^2_x}^2 + \mu \Delta t \sum_{m=0}^{n-1} \norm{\nabla\bfe_{m+1}}_{L^2_x}^2 \\
&\qquad =  \sum_{m=0}^{n-1} \left( \int_{\mt} I_1(m) : \nabla \bfe_{m+1} \dx + \int_{\mt} I_2(m) \cdot \bfe_{m+1} \dx  + \int_{\mt} I_3(m) \cdot  \bfe_{m+1} \dx \right).
\end{align*}
Observe that all terms on the left-hand-side are non-negative. Therefore, by neglecting all but one term, which we call~$\bfX_n$ and can be either of them; taking the maximum over~$n \in \{1,\ldots,M\}$ and applying expectations we arrive at 
\begin{align*}
&\mathbb{E}\left[ \max_{1\leq n\leq M} \bfX_n \right] \leq \mathbb{E}\left[ \max_{1\leq n\leq M} \sum_{m=0}^{n-1} \int_{\mt} I_1(m) : \nabla \bfe_{m+1} \dx  \right] \\
&\qquad + \mathbb{E}\left[ \max_{1\leq n\leq M} \sum_{m=0}^{n-1} \int_{\mt} I_2(m) \cdot \bfe_{m+1} \dx   \right] + \mathbb{E}\left[ \max_{1\leq n\leq M} \sum_{m=0}^{n-1} \int_{\mt} I_3(m) \cdot  \bfe_{m+1} \dx  \right].
\end{align*}
Invoking the above inequality for each choice of $\bfX_n$ yields
\begin{align}\label{eq:error-est-pre-Gronwall}
\begin{aligned}
&\frac{1}{2}  \mathbb{E}\left[ \max_{1\leq n\leq M} \norm{\bfe_n}_{L^2_x}^2\right]+\frac{1}{2} \mathbb{E}\left[ \sum_{m=0}^{M-1} \norm{\bfe_{n+1} - \bfe_{n}}_{L^2_x}^2 \right] +\mu  \mathbb{E}\left[ \Delta t \sum_{m=0}^{M-1} \norm{\nabla\bfe_{n+1}}_{L^2_x}^2 \right]  \\
&\qquad \leq 3 (\mathrm{I} + \mathrm{II} + \mathrm{III}), 
\end{aligned}
\end{align}
where 
\begin{align*}
\mathrm{I} &:= \mathbb{E}\left[ \max_{1\leq n\leq M} \sum_{m=0}^{n-1} \int_{\mt} I_1(m) : \nabla \bfe_{m+1} \dx  \right], \\
\mathrm{II} &:= \mathbb{E}\left[ \max_{1\leq n\leq M} \sum_{m=0}^{n-1} \int_{\mt} I_2(m) \cdot \bfe_{m+1} \dx   \right], \\
\mathrm{III} &:= \mathbb{E}\left[ \max_{1\leq n\leq M} \sum_{m=0}^{n-1} \int_{\mt} I_3(m) \cdot \bfe_{m+1} \dx   \right].
\end{align*}
Inequality~\eqref{eq:thm:4} will follow from an application of Gronwall's lemma once we have established the following estimates: for arbitrary~$\delta > 0$, $\beta > 0$ and $\alpha \in (0,1/2)$ there exist constants~$C$ and $C(\delta)$ (both independent of~$\beta$) such that
\begin{subequations} \label{eq:error-terms-all}
\begin{align} \label{eq:error-term-1}
\mathrm{I} &\leq \delta \mathbb{E}\left[\Delta t \sum_{m=0}^{M-1} \norm{\nabla \bfe_{m+1} }_{L^2_x}^2 \right]  + C(\delta) (\Delta t)^{2\alpha}, \\ \label{eq:error-term-2}
\mathrm{II} &\leq C (\Delta t)^{2\alpha} +  \delta  \mathbb{E}\left[ \Delta t  \sum_{m=0}^{M-1}  \norm{ \nabla \bfe_{m+1} }_{L^2_x}^2  \right] +  C(\delta) \mathbb{E}\left[ \Delta t  \sum_{m=1}^{M} \max_{1\leq n \leq m} \norm{\bfe_{n}}_{L^2_x}^2  \right], \\ \label{eq:error-term-3}
\mathrm{III} &\leq \delta \mathbb{E}\left[  \sum_{m=0}^{M-1} \norm{\bfe_{m+1} - \bfe_{m}}_{L^2_x}^2  \right] + \delta \mathbb{E}\left[ \max_{1\leq n \leq M} \norm{\bfe_{n}}_{L^2_x}^2 \right]  + C(\delta) (\Delta t)^{2\alpha} \\ \nonumber
&\qquad + \frac{\beta}{4}  \mathbb{E}\left[  \sum_{m=0}^{M-1} \norm{\bfe_{m+1} - \bfe_{m}}_{L^2_x}^2  \right]  +  \frac{1}{4 \beta}  \sum_{k=1}^K \abs{\bfsigma_k}^2  \mathbb{E}\left[  \Delta t \sum_{m=0}^{M-1}  \norm{\nabla \bfe_{m+1}}_{L^2_x}^2 \right].
\end{align}
\end{subequations}
For the moment, let us assume these inequalities have already been established (we verify them later). Using them in Inequality~\eqref{eq:error-est-pre-Gronwall} yields
\begin{align} \label{eq:pre-gronwall}
\begin{aligned}
&\frac{1}{2}\left( 1 - 6\delta \right)  \mathbb{E}\left[ \max_{1\leq n\leq M} \norm{\bfe_n}_{L^2_x}^2\right]+\frac{1}{2}\left( 1 - 6\delta - \frac{3}{2}\beta \right)  \mathbb{E}\left[ \sum_{m=0}^{M-1} \norm{\bfe_{m+1} - \bfe_{m}}_{L^2_x}^2 \right] \\
&\qquad \qquad +\left(\mu - 6\delta -  \frac{3}{4\beta}  \sum_{k=1}^K \abs{\bfsigma_k}^2 \right) \mathbb{E}\left[ \Delta t \sum_{m=0}^{M-1} \norm{\nabla\bfe_{m+1}}_{L^2_x}^2 \right]  \\
&\qquad \leq 3 (C + 2C(\delta)) (\Delta t)^{2\alpha}  + 3 C(\delta) \mathbb{E}\left[ \Delta t \sum_{m=1}^{M}  \max_{1\leq n \leq m} \norm{\bfe_{n}}_{L^2_x}^2  \right].
\end{aligned}
\end{align}
We intend to apply Gronwall's lemma to $m \mapsto  \mathbb{E}\left[ \max_{1\leq n\leq m} \norm{\bfe_n}_{L^2_x}^2\right]$. For this, we need to make sure that the coefficients of the other terms are non-negative. We even impose positivity of the coefficients, since this allows us to additionally derive error estimates for the gradient error and the time-shifted error. The conditions on the coefficients read:
\begin{subequations}
\begin{align} \label{eq:1st-coeff}
1 - 6\delta - \frac{3}{2}\beta > 0 \qquad \Leftrightarrow \qquad \beta < \frac{2}{3}- 4 \delta,
\end{align}
and
\begin{align} \label{eq:2nd-coeff}
\mu - 6\delta -  \frac{3}{4\beta}  \sum_{k=1}^K |\bfsigma_k|^2 > 0.
\end{align}
\end{subequations}
Notice that $\delta >0$ is still to our disposal, which allows us to choose~$\beta$ close to~$2/3$. To do this rigorously, let~$\varepsilon >0$. Now, we use the ansatz $\beta := \frac{2}{3} - \varepsilon$ and check the constraints on~$\delta$ that are imposed by the Inequalities~\eqref{eq:1st-coeff} and~\eqref{eq:2nd-coeff}. The first inequality imposes $\delta < \frac{\varepsilon}{4}$; whereas the second inequality requires
\begin{align*}
&0 < \mu - 6\delta - \frac{3}{4} \frac{3}{2-3\varepsilon} \sum_{k=1}^K |\bfsigma_k|^2 = \mu - 6\delta - \frac{2}{2-3\varepsilon} \Gamma_\bfsigma \\
\Leftrightarrow \qquad & \delta < \frac{1}{6}\left(\mu - \frac{2}{2-3\varepsilon} \Gamma_\bfsigma  \right) =  \frac{1}{6}\left(\mu - \Gamma_\bfsigma - \frac{3\varepsilon}{2-3\varepsilon} \Gamma_\bfsigma  \right).
\end{align*}
As long as the last term is positive, we can choose~$\delta> 0$ as a function of~$\varepsilon$. Thus, 
\begin{align*}
 0 < \frac{1}{6}\left(\mu - \Gamma_\bfsigma - \frac{3\varepsilon}{2-3\varepsilon} \Gamma_\bfsigma  \right) \qquad \Leftrightarrow \qquad \varepsilon < \frac{2}{3}\frac{\mu - \Gamma_\bfsigma}{\mu}. 
\end{align*}
We are ready to fix the parameters. We distinguish fixed parameters from free ones, by adding a star to them: firstly, we choose~$\varepsilon^* < \min\big\{\frac{2}{3}, \frac{2}{3}\frac{\mu - \Gamma_\bfsigma}{\mu}\big\}$ (and thus fix~$\beta^*$), where we use that the second term is positive thanks to the assumed Inequality~\eqref{ass:vis-dom-noise}; having~$\varepsilon^*$ fixed, we choose~$\delta^*< \min\big\{\frac{\varepsilon^*}{4},  \frac{1}{6}\big(\mu - \Gamma_\bfsigma - \frac{3\varepsilon^*}{2-3\varepsilon^*} \Gamma_\bfsigma  \big)\big\}$.

Going back to Inequality~\eqref{eq:pre-gronwall} and, additionally, neglecting the non-negative terms yield
\begin{align*}
&\frac{1}{2}\left( 1 - 6\delta^* \right)  \mathbb{E}\left[ \max_{1\leq n\leq M} \norm{\bfe_n}_{L^2_x}^2\right] \leq 3 (C + 2C(\delta^*)) (\Delta t)^{2\alpha}  + 3 C(\delta^*) \mathbb{E}\left[ \Delta t \sum_{m=1}^{M} \max_{1\leq n \leq m} \norm{\bfe_{n}}_{L^2_x}^2  \right].
\end{align*}
This means we can apply Gronwall's lemma, which then ensures the existence of a constant~$C$ such that 
\begin{align} \label{eq:post-Gronwall}
 \mathbb{E}\left[ \max_{1\leq n\leq M} \norm{\bfe_n}_{L^2_x}^2\right] \leq C (\Delta t)^{2\alpha}.
\end{align}
Using the above estimate in~\eqref{eq:pre-gronwall} establishes
\begin{align*}
&\frac{1}{2}\left( 1 - 6\delta^* \right)  \mathbb{E}\left[ \max_{1\leq n\leq M} \norm{\bfe_n}_{L^2_x}^2\right]+\frac{1}{2}\left( 1 - 6\delta^* - \frac{3}{2}\beta^* \right)  \mathbb{E}\left[ \sum_{m=0}^{M-1} \norm{\bfe_{m+1} - \bfe_{m}}_{L^2_x}^2 \right] \\
&\qquad \qquad +\left(\mu - 6\delta^* -  \frac{3}{4\beta^*}  \sum_{k=1}^K |\bfsigma_k|^2 \right) \mathbb{E}\left[ \Delta t \sum_{m=0}^{M-1} \norm{\nabla\bfe_{m+1}}_{L^2_x}^2 \right]  \leq C(\Delta t)^{2\alpha}.
\end{align*}
This concludes the proof of Theorem~\ref{thm:3.1}, provided the inequalities presented in~\eqref{eq:error-terms-all} are true. Now, we verify each inequality.

{\bf Ad I.} Term~$\mathrm{I}$ corresponds to the error introduced by freezing the analytic solution at a time instance instead of integrating over a time slice of size~$\Delta t$. Using Cauchy-Schwarz', Jensen's and weighted Young's inequalities, we infer that, for arbitrary~$\delta > 0$, 
\begin{align*}
\mathrm{I} &= \mathbb{E}\left[ \max_{1\leq n\leq M} \sum_{m=0}^{n-1} \int_{\mt} \mu \int_{t_{m}}^{t_{m+1}} \nabla \big(\bfu(t_{m+1})-\bfu(t)\big) \dt : \nabla \bfe_{m+1} \dx  \right] \\
&\leq  \mu \mathbb{E}\left[  \sum_{m=0}^{M-1}   \int_{t_{m}}^{t_{m+1}} \norm{\nabla \big(\bfu (t_{m+1})-\bfu(t)\big)}_{L^2_x} \dt  \, \norm{\nabla \bfe_{m+1} }_{L^2_x}   \right] \\
&\leq \delta \mathbb{E}\left[\Delta t \sum_{m=0}^{M-1} \norm{\nabla \bfe_{m+1} }_{L^2_x}^2 \right] + C(\delta) \mathbb{E}\left[\sum_{m=0}^{M-1}   \int_{t_{m}}^{t_{m+1}} \norm{\nabla \big(\bfu (t_{m+1})-\bfu(t)\big)}_{L^2_x}^2 \dt \right].
\end{align*}
Notice that, thanks to Corollary~\ref{cor:uholder} with $l = 2$, for arbitrary~$\alpha \in (0,1/2)$ there exists a constant $C$ such that
\begin{align*}
 \mathbb{E}\left[\sum_{m=0}^{M-1}   \int_{t_{m}}^{t_{m+1}} \norm{\nabla \big(\bfu (t_{m+1})-\bfu(t)\big)}_{L^2_x}^2 \dt \right] \leq C (\Delta t)^{2\alpha}. 
\end{align*}
Combining this with the previous estimate allows us to conclude, for arbitrary~$\delta > 0$, 
\begin{align*}
\mathrm{I} \leq \delta \mathbb{E}\left[\Delta t \sum_{m=0}^{M-1} \norm{\nabla \bfe_{m+1} }_{L^2_x}^2 \right]  + C(\delta) (\Delta t)^{2\alpha}.
\end{align*}

{\bf Ad II.} Term~$\mathrm{II}$ corresponds to the approximation of the non-linear convection. We denote the arithmetic mean of the analytic solution at integer times and errors by $\bfu^\star(t_{m+1/2}):=\frac{1}{2}(\bfu(t_{m+1})+\bfu(t_m))$ and $\bfe_{m+\frac{1}{2}}:= \tfrac{1}{2}(\bfe_{m+1}+ \bfe_{m})$, respectively. We start by artificially freezing the time integral, and adding and subtracting $\Delta t (\bfu^\star(t_{m+\frac{1}{2}}) \cdot \nabla) \bfu_{m+\frac{1}{2}} $:
\begin{align*}
I_2(m) &= \int_{t_{m}}^{t_{m+1}} \Big((\bfu^\star(t_{m+\frac{1}{2}})\cdot\nabla)\bfu^\star(t_{m+\frac{1}{2}})-(\bfu(t)\cdot\nabla)\bfu(t)\Big) \dt\\
&\quad-\Delta t \Big((\bfu^\star(t_{m+\frac{1}{2}})\cdot\nabla) \bfe_{m+\frac{1}{2}}+(\bfe_{m+\frac{1}{2}}\cdot\nabla)\bfu_{m+\frac{1}{2}}\Big).
\end{align*}
The first term solely depends on the analytic solution and measures the error for freezing it at a time instance; the second term is a time-discretized error of the analytic solution and its approximation. Consequently, 
\begin{align*}
\mathrm{II} &\leq \mathbb{E}\left[ \max_{1\leq n\leq M} \sum_{m=0}^{n-1} \int_{\mt} \int_{t_{m}}^{t_{m+1}} \Big((\bfu^\star(t_{m+\frac{1}{2}})\cdot\nabla)\bfu^\star(t_{m+\frac{1}{2}})-(\bfu(t)\cdot\nabla)\bfu(t)\Big) \dt \cdot \bfe_{m+1} \dx   \right] \\
&\quad + \mathbb{E}\left[ \max_{1\leq n\leq M} \sum_{m=0}^{n-1} \int_{\mt} -\Delta t \Big((\bfu^\star(t_{m+\frac{1}{2}})\cdot\nabla) \bfe_{m+\frac{1}{2}}+(\bfe_{m+\frac{1}{2}}\cdot\nabla)\bfu_{m+\frac{1}{2}}\Big) \cdot \bfe_{m+1} \dx   \right].
\end{align*} 
We will discuss both terms separately.

We start with the first term: Applying Cauchy-Schwarz', Jensen's and weighted Young's inequalities, we infer that, for arbitrary~$\delta > 0$, 
\begin{align*}
&\mathbb{E}\left[ \max_{1\leq n\leq M} \sum_{m=0}^{n-1} \int_{\mt} \int_{t_{m}}^{t_{m+1}} \Big((\bfu^\star(t_{m+\frac{1}{2}})\cdot\nabla)\bfu^\star(t_{m+\frac{1}{2}})-(\bfu(t)\cdot\nabla)\bfu(t)\Big) \dt \cdot \bfe_{m+1} \dx  \right] \\
&\qquad \leq \mathbb{E}\left[ \sum_{m=0}^{M-1} \int_{t_{m}}^{t_{m+1}} \norm{(\bfu^\star(t_{m+\frac{1}{2}})\cdot\nabla)\bfu^\star(t_{m+\frac{1}{2}})-(\bfu(t)\cdot\nabla)\bfu(t) }_{L^2_x} \dt \, \norm{\bfe_{m+1}}_{L^2_x} \right] \\
&\qquad \leq \delta \mathbb{E}\left[\max_{1\leq n\leq M} \norm{\bfe_{n} }_{L^2_x}^2   \right]  \\
&\qquad \qquad + C(\delta) \mathbb{E}\left[\left( \sum_{m=0}^{M-1} \int_{t_{m}}^{t_{m+1}} \norm{(\bfu^\star(t_{m+\frac{1}{2}})\cdot\nabla)\bfu^\star(t_{m+\frac{1}{2}})-(\bfu(t)\cdot\nabla)\bfu(t) }_{L^2_x} \dt \right)^2 \right].
\end{align*}
Notice that, using H\"older's inequality,
\begin{align*}
&\mathbb{E}\left[\left( \sum_{m=0}^{M-1} \int_{t_{m}}^{t_{m+1}} \norm{(\bfu^\star(t_{m+\frac{1}{2}})\cdot\nabla)\bfu^\star(t_{m+\frac{1}{2}})-(\bfu(t)\cdot\nabla)\bfu(t) }_{L^2_x} \dt \right)^2 \right] \\
&\leq 2 \mathbb{E}\left[\left( \sum_{m=0}^{M-1} \int_{t_{m}}^{t_{m+1}} \norm{([\bfu^\star(t_{m+\frac{1}{2}}) - \bfu(t) ]\cdot\nabla)\bfu^\star(t_{m+\frac{1}{2}})}_{L^2_x} \dt \right)^2 \right] \\
&\quad + 2\mathbb{E}\left[\left( \sum_{m=0}^{M-1} \int_{t_{m}}^{t_{m+1}} \norm{(\bfu(t)\cdot\nabla)[ \bfu^\star(t_{m+\frac{1}{2}}) - \bfu(t)] }_{L^2_x} \dt \right)^2 \right] \\
&\leq C (\Delta t)^{2\alpha} \left( \mathbb{E}\left[ \norm{\bfu}_{C^\alpha([0,T]; L^2_x)}^4 \right] \mathbb{E}\left[ \norm{\nabla \bfu}_{C([0,T];L^\infty_x)}^4 \right] \right)^{1/2} \\
&\quad+ C (\Delta t)^{2\alpha} \left( \mathbb{E}\left[ \norm{\nabla \bfu}_{C^\alpha([0,T]; L^2_x)}^4 \right] \mathbb{E}\left[ \norm{ \bfu}_{C([0,T];L^\infty_x)}^4 \right] \right)^{1/2}.
\end{align*}
It remains to argue, why the analytic solution is sufficiently regular: Firstly, Corollary~\ref{cor:uholder} applied with $l=2$ guarantees finiteness of the H\"older norms; and secondly, the pathwise estimate~\eqref{eq:enl} invoked with $l = 3$, together with the embedding $W^{3,2}(\mt)\hookrightarrow W^{1,\infty}(\mt)$, ensure that $\mathbb{E}\big[ \norm{\nabla \bfu}_{C([0,T];L^\infty_x)}^4 \big] \leq C$.

Next, we analyse the second term: Applying H\"older's inequality, we derive that
\begin{align*}
&\mathbb{E}\left[ \max_{1\leq n\leq M} \sum_{m=0}^{n-1} \int_{\mt} -\Delta t \Big((\bfu^\star(t_{m+\frac{1}{2}})\cdot\nabla) \bfe_{m+\frac{1}{2}}+(\bfe_{m+\frac{1}{2}}\cdot\nabla)\bfu_{m+\frac{1}{2}}\Big) \cdot \bfe_{m+1} \dx   \right] \\
&\qquad \leq \mathbb{E}\left[ \sum_{m=0}^{M-1} \Delta t \norm{\bfu^\star(t_{m+\frac{1}{2}})}_{L^\infty_x} \norm{ \nabla \bfe_{m+\frac{1}{2}} }_{L^2_x} \norm{\bfe_{m+1}}_{L^2_x}  \right] \\
&\qquad \qquad  + \mathbb{E}\left[  \sum_{m=0}^{M-1} \Delta t  \norm{\bfe_{m+\frac{1}{2}} }_{L^2_x} \norm{ \nabla \bfu_{m+\frac{1}{2}}}_{L^\infty_x} \norm{\bfe_{m+1}}_{L^2_x}  \right].
\end{align*}
Using weighted Young's inequality and~$\norm{\nabla \bfe_{m+1/2}}_{L^2_x}\leq \tfrac{1}{2}(\norm{\nabla \bfe_{m+1}}_{L^2_x} +\norm{\nabla \bfe_{m}}_{L^2_x} )$, we find that, for arbitrary~$\delta > 0$,
\begin{align*}
&\mathbb{E}\left[ \sum_{m=0}^{M-1} \Delta t \norm{\bfu^\star(t_{m+\frac{1}{2}})}_{L^\infty_x} \norm{ \nabla \bfe_{m+\frac{1}{2}} }_{L^2_x} \norm{\bfe_{m+1}}_{L^2_x}  \right] \\
&\qquad \leq \delta  \mathbb{E}\left[ \sum_{m=0}^{M-1} \Delta t  \norm{ \nabla \bfe_{m+1} }_{L^2_x}^2  \right] +  C(\delta) \mathbb{E}\left[ \sum_{m=0}^{M-1} \Delta t \norm{\bfu^\star(t_{m+\frac{1}{2}})}_{L^\infty_x}^2 \norm{\bfe_{m+1}}_{L^2_x}^2  \right],
\end{align*}
Recall that the analytic solution is bounded $\mathbb{P}$-a.s., thanks to Inequality~\eqref{eq:enl} applied with~$l = 2$ and the embedding $W^{2,2}(\mt)\hookrightarrow L^{\infty}(\mt)$. Thus, additionally shifting the summation index and estimating the local time error by the maximal error of the past, results in
\begin{align*}
&\mathbb{E}\left[ \sum_{m=0}^{M-1} \Delta t \norm{\bfu^\star(t_{m+\frac{1}{2}})}_{L^\infty_x}^2 \norm{\bfe_{m+1}}_{L^2_x}^2  \right]\\
&\qquad \leq C \mathbb{E}\left[ \sum_{m=1}^{M} \Delta t \norm{\bfe_{m}}_{L^2_x}^2  \right] \leq C \mathbb{E}\left[ \sum_{m=1}^{M} \Delta t \max_{1\leq n \leq m} \norm{\bfe_{n}}_{L^2_x}^2  \right].
\end{align*}
Similarly, one checks that 
\begin{align*}
\mathbb{E}\left[  \sum_{m=0}^{M-1} \Delta t  \norm{\bfe_{m+\frac{1}{2}} }_{L^2_x} \norm{ \nabla \bfu_{m+\frac{1}{2}}}_{L^\infty_x} \norm{\bfe_{m+1}}_{L^2_x}  \right] \leq  C \mathbb{E}\left[ \sum_{m=1}^{M} \Delta t \max_{1\leq n \leq m} \norm{\bfe_{n}}_{L^2_x}^2  \right],
\end{align*}
where one uses the pathwise stability of the algorithm for arbitrary high spatial norms, i.e., Inequality~\eqref{lem:3.1b} with~$l=3$ and the embedding $W^{3,2}(\mt)\hookrightarrow W^{1,\infty}(\mt)$.

In total, we have shown that, for arbitrary~$\delta > 0$, 
\begin{align*}
\mathrm{II} \leq C (\Delta t)^{2\alpha} +  \delta  \mathbb{E}\left[ \sum_{m=0}^{M-1} \Delta t  \norm{ \nabla \bfe_{m+1} }_{L^2_x}^2  \right] +  C(\delta) \mathbb{E}\left[ \sum_{m=1}^{M} \Delta t \max_{1\leq n \leq m} \norm{\bfe_{n}}_{L^2_x}^2  \right].
\end{align*}

{\bf Ad III.} Term~$\mathrm{III}$ corresponds to the approximation of the Stratonovich integral. We start with the  decomposition $I_3(m) = I_3^a(m) + I_3^b(m)$, where 
\begin{align*}
I_3^a(m) &:= \sum_{k=1}^K\left(\int_{t_{m}}^{t_{m+1}}(\bfsigma_k\cdot \nabla )\bfu \circ\,\dd W_k-(\bfsigma_k\cdot\nabla )\bfu^\star(t_{m+\frac{1}{2}}) \Delta_m W_k\right), \\
I_3^b(m) &:=\sum_{k=1}^K (\bfsigma_k\cdot\nabla )\bfe_{m+\frac{1}{2}}\Delta_m W_k.
\end{align*}
Consequently,
\begin{align*}
\mathrm{III} &\leq 
\mathbb{E}\left[ \max_{1\leq n\leq M} \sum_{m=0}^{n-1} \int_{\mt} I_3^a(m) \cdot \bfe_{m+1} \dx   \right] + \mathbb{E}\left[ \max_{1\leq n\leq M} \sum_{m=0}^{n-1} \int_{\mt} I_3^b(m) \cdot \bfe_{m+1} \dx   \right] \\
&=: \mathrm{III}_a + \mathrm{III}_b.
\end{align*}

{\bf Ad III$_\mathbf{a}$.} We decompose~$I_3^a(m)$. Using the definition of the Stratonovich integral in terms of the It\^o integral (see Equation~\eqref{eq:Strato-and-Ito}), which holds as an identity on $L^2(\mathbb{T}^2)$ since $\bfu$ is sufficiently regular; together with adding and subtracting~$(\bfsigma_k\cdot\nabla )\bfu(t_{m}) \Delta_m W_k$, we obtain 
\begin{align*}
I_3^a(m) &= \underbrace{\sum_{k=1}^K \int_{t_{m}}^{t_{m+1}}(\bfsigma_k\cdot \nabla )\big(\bfu - \bfu(t_m) \big) \, \dd W_k}_{=:R_1(m)}\\
&\quad + \underbrace{\sum_{k=1}^K\frac{1}{2}  \bigg(  \int_{t_{m}}^{t_{m+1}}(\bfsigma_k\cdot \nabla )[(\bfsigma_k\cdot \nabla )\bfu] \, \dd t-(\bfsigma_k\cdot\nabla) (\bfu(t_{m+1}) - \bfu(t_m))\Delta_m W_k\bigg)}_{=:\mathfrak{A}(m)}.
\end{align*}
We rewrite the Term~$\mathfrak{A}(m)$. For fixed~$\bfvarphi \in C^\infty_{\Div}(\mathbb{T}^2)$, after invoking integration by parts and interchanging spatial and temporal integrals, we find that
\begin{align} \nonumber
\int_{\mt} \mathfrak{A}(m) \cdot \bfphi \, \dd x &= \sum_{k=1}^K\frac{1}{2}   \int_{\mt} (\bfu(t_{m+1}) - \bfu(t_m))  \cdot (\bfsigma_k\cdot \nabla ) \bfvarphi \dx \, \Delta_m W_k \\ \label{eq:Am-second}
&\qquad +\int_{\mt} \sum_{k=1}^K\frac{1}{2}  \int_{t_{m}}^{t_{m+1}}   (\bfsigma_k\cdot \nabla )[(\bfsigma_k\cdot \nabla ) \bfu] \, \dd t \cdot \bfvarphi \dx  .
\end{align}
Notice that~$(\bfsigma_k\cdot \nabla)\bfvarphi \in  C^\infty_{\Div}(\mathbb{T}^2)$, therefore utilising that $\bfu$ is a solution to Equation~\eqref{eq:mmnt}, the first term can be rewritten as 
\begin{align*}
&\sum_{k=1}^K\frac{1}{2} \int_{\mt}  \left( \bfu(t_{m+1}) - \bfu(t_m) \right)  \cdot (\bfsigma_k\cdot \nabla ) \bfvarphi \dx \, \Delta_m W_k\\
&= \sum_{k=1}^K\frac{1}{2} \left( \int_{t_m}^{t_{m+1}} \int_{\mt}\bfu\otimes\bfu:\nabla[(\bfsigma_k\cdot \nabla ) \bfvarphi] \dx\,\dif t \right) \, \Delta_m W_k\\
&\quad + \sum_{k=1}^K\frac{1}{2} \left( -\mu \int_{t_m}^{t_{m+1}} \int_{\mt}\nabla\bfu:\nabla [(\bfsigma_k\cdot \nabla ) \bfvarphi]\dx\,\dif t \right) \, \Delta_m W_k \\
&\quad + \sum_{k=1}^K\frac{1}{2}   \left( -\sum_{\ell=1}^K \int_{t_m}^{t_{m+1}} \int_{\mt}\bfu\otimes \bfsigma_{\ell} :\nabla[(\bfsigma_k\cdot \nabla )] \bfvarphi \,\dx\,\D W_{\ell} \right)  \, \Delta_m W_k \\
&\quad + \sum_{k=1}^K\frac{1}{2} \left( -\frac{1}{2}\sum_{\ell=1}^K \int_{t_m}^{t_{m+1}} \int_{\mt}\bfsigma_{\ell}\otimes\bfsigma_{\ell} \nabla\bfu:\nabla[(\bfsigma_k\cdot \nabla ) \bfvarphi] \dx\,\dt  \right) \, \Delta_m W_k.
\end{align*}
Integrating by parts together with interchanging spatial and temporal integrals result in
\begin{align*}
&\sum_{k=1}^K\frac{1}{2} \int_{\mt}  \left( \bfu(t_{m+1}) - \bfu(t_m) \right)  \cdot (\bfsigma_k\cdot \nabla ) \bfvarphi \dx \, \Delta_m W_k\\
&= \int_{\mt} \underbrace{\sum_{k=1}^K\frac{1}{2} \left(  \int_{t_m}^{t_{m+1}} (\bfsigma_k\cdot \nabla ) [(\bfu\cdot \nabla )\bfu] \,\dif t \right) \, \Delta_m W_k}_{=:R_2(m)} \cdot\, \bfvarphi \dx\\
&\quad + \int_{\mt} \underbrace{\sum_{k=1}^K\frac{1}{2} \left( -\mu \int_{t_m}^{t_{m+1}} (\bfsigma_k\cdot \nabla ) \Delta \bfu \,\dif t \right) \, \Delta_m W_k }_{=: R_3(m) } \cdot \, \bfvarphi \dx \\
&\quad + \int_{\mt} \underbrace{\sum_{k=1}^K\frac{1}{2} \left( -\frac{1}{2}\sum_{\ell=1}^K \int_{t_m}^{t_{m+1}} (\bfsigma_k\cdot \nabla )  \big[(\bfsigma_{\ell} \cdot \nabla)[ (\bfsigma_{\ell} \cdot \nabla)\bfu ] \big] \,\dt  \right) \, \Delta_m W_k }_{=: R_4(m)} \cdot \, \bfvarphi \dx \\
&\quad + \int_{\mt} \sum_{k=1}^K\frac{1}{2}   \left( -\sum_{\ell=1}^K \int_{t_m}^{t_{m+1}}  (\bfsigma_k\cdot \nabla )[(\bfsigma_{\ell} \cdot \nabla )\bfu]  \,\D W_{\ell} \right)  \, \Delta_m W_k \cdot \, \bfvarphi \,\dx.
\end{align*}
Due to density of $C^\infty_{\Div}(\mathbb{T}^2) \subset L^{2}_{\Div}(\mathbb{T}^2)$, above equality extends to all $\bfvarphi \in  L^{2}_{\Div}(\mathbb{T}^2)$.

All but the last term on the right-hand-side enjoy the desired time decay with respect to the time resolution by standard arguments. We make this precise later. But first, we further decompose the last term: using the second term on the right-hand-side of Equation~\eqref{eq:Am-second} to compensate the last one, we decompose as follows:
\begin{align*} 
&\int_{\mt} \sum_{k=1}^K\frac{1}{2}   \left( -\sum_{\ell=1}^K \int_{t_m}^{t_{m+1}}  (\bfsigma_k\cdot \nabla )[(\bfsigma_{\ell} \cdot \nabla )\bfu]  \,\D W_{\ell} \right)  \, \Delta_m W_k \cdot  \bfvarphi \,\dx \\
&\qquad \qquad +\int_{\mt} \sum_{k=1}^K\frac{1}{2}  \int_{t_{m}}^{t_{m+1}}   (\bfsigma_k\cdot \nabla )[(\bfsigma_k\cdot \nabla ) \bfu] \, \dd t \cdot \bfvarphi \dx\\
&\qquad= \int_{\mt} R_5(m) \cdot \bfvarphi \,\dx  + \int_{\mt}  R_6(m) \cdot \bfvarphi \,\dx + \int_{\mt}  R_7(m)\cdot \bfvarphi\,\dx,
 \end{align*}
where
\begin{align*}
   R_5(m)&:=-\frac{1}{2}\sum_{k=1}^K\sum_{\ell=1}^K \left( \int_{t_m}^{t_{m+1}}(\bfsigma_k\cdot\nabla)[(\bfsigma_\ell \cdot \nabla )(\bfu -\bfu(t_m))]\dd W_\ell\right)\Delta_m W_k,\\
    R_6(m)&:=-\frac{1}{2}\sum_{k=1}^K\sum_{\ell=1}^K(\bfsigma_k\cdot\nabla)[(\bfsigma_\ell \cdot \nabla )\bfu (t_m)]\Delta_m W_k \Delta_m W_\ell+ \frac{1}{2}\Delta t\sum_{k=1}^K(\bfsigma_k\cdot\nabla)[(\bfsigma_k\cdot \nabla )\bfu (t_m)],\\
R_7(m)&:=\frac{1}{2}\sum_{k=1}^K\int_{t_{m}}^{t_{m+1}}(\bfsigma_k\cdot\nabla)[(\bfsigma_k\cdot \nabla )(\bfu-\bfu(t_m))]\,\dd t.
\end{align*}
The Terms~$R_5(m)$ and~$ R_7(m)$ enjoy accelerated time convergence since they depend on time increments of the analytic solution. The Term~$R_6(m)$ cannot utilise solution increments. Instead we notice that $R_6(m)$ is a centred $L^2_x$-valued random variable. We will use martingale inequalities to estimate this term eventually. 

Before we start analysing each term, we summarize the decomposition of~$I_3^a(m)$ we have obtained so far:
\begin{align*}
\mathbb{E}\left[ \max_{1\leq n\leq M} \sum_{m=0}^{n-1} \int_{\mt} I_3^a(m) \cdot \bfe_{m+1} \dx   \right] \leq \sum_{\eta = 1}^7 \mathbb{E}\left[ \max_{1\leq n\leq M} \sum_{m=0}^{n-1} \int_{\mt} R_\eta(m) \cdot \bfe_{m+1} \dx   \right].
\end{align*} 
Next, we derive estimates for each term. 

Let $\eta \in \{1,\ldots,7\}\backslash \{1,6\}$ (the Cases~$\eta = 1$ and~$\eta = 6$ are special and will be considered afterwards). H\"older's and weighted Young's inequalities show, for arbitrary~$\delta > 0$,  
\begin{align*}
&\mathbb{E}\left[ \max_{1\leq n\leq M} \sum_{m=0}^{n-1} \int_{\mt} R_\eta(m) \cdot \bfe_{m+1} \dx   \right] \leq \mathbb{E}\left[ \sum_{m=0}^{M-1} \norm{R_\eta(m)}_{L^2_x} \norm{\bfe_{m+1}}_{L^2_x}   \right] \\
&\qquad \leq \delta \mathbb{E}\left[ \max_{1\leq n \leq M} \norm{\bfe_{n}}_{L^2_x}^2 \right] + C(\delta) \mathbb{E}\left[ \frac{1}{\Delta t} \sum_{m=0}^{M-1} \norm{R_\eta(m)}_{L^2_x}^2  \right].
\end{align*}
H\"older's inequality guarantees
\begin{align*}
&\mathbb{E}\left[ \norm{R_2(m)}_{L^2_x}^2 \right] = \mathbb{E}\left[ \bigg\|\sum_{k=1}^K\frac{1}{2} \left(  \int_{t_m}^{t_{m+1}} (\bfsigma_k\cdot \nabla ) [(\bfu\cdot \nabla )\bfu] \,\dif t \right) \, \Delta_m W_k \bigg\|_{L^2_x}^2 \right]  \\
&\quad \leq \frac{K}{4}\sum_{k=1}^K \mathbb{E}\left[ \bigg\| \int_{t_m}^{t_{m+1}} (\bfsigma_k\cdot \nabla ) [(\bfu\cdot \nabla )\bfu] \,\dif t \bigg\|_{L^2_x}^2 \, \abs{\Delta_m W_k}^2  \right]  \\
&\quad \leq  \Delta t\,\frac{K}{4}\sum_{k=1}^K \mathbb{E}\left[ \int_{t_m}^{t_{m+1}} \norm{(\bfsigma_k\cdot \nabla ) [(\bfu\cdot \nabla )\bfu] }_{L^2_x}^2 \,\dif t \, \abs{\Delta_m W_k}^2  \right] \\
&\quad \leq  \Delta t\,\frac{K}{4}\sum_{k=1}^K \left( \mathbb{E}\left[ \left( \int_{t_m}^{t_{m+1}} \norm{(\bfsigma_k\cdot \nabla ) [(\bfu\cdot \nabla )\bfu] }_{L^2_x}^2 \,\dif t\right)^2  \right] \right)^{1/2} \left( \mathbb{E}\left[ \abs{\Delta_m W_k}^4  \right] \right)^{1/2}.
\end{align*}
 Since $(\Delta_m W_k)_{m,k}$, $m \in \{0,\ldots,M-1\}$ and $k \in \{1, \ldots, K\}$, is a family of identically distributed random variables (centred normal distribution with variance~$\Delta t$), we conclude that $\left(\E \left[ |\Delta_m W_k|^4\right]\right)^{1/2} \leq C \Delta t$. Additionally, by computing derivatives explicitly, together with using H\"older's inequality and Young's inequality, we find that 
\begin{align*}
&\mathbb{E}\left[ \left( \int_{t_m}^{t_{m+1}} \norm{(\bfsigma_k\cdot \nabla ) [(\bfu\cdot \nabla )\bfu] }_{L^2_x}^2 \,\dif t\right)^2  \right] \\
&\qquad \leq C \mathbb{E}\left[ \left( \int_{t_m}^{t_{m+1}} \norm{\nabla \bfu}_{L^4_x}^4 + \norm{\bfu}_{L^\infty_x}^2 \norm{\nabla^2 \bfu}_{L^2_x}^2 \,\dif t\right)^2  \right] \\
&\qquad \leq C (\Delta t)^2 \left( \mathbb{E}\left[ \norm{\nabla \bfu}_{C([0,T];L^4_x)}^8 \right] + \mathbb{E}\left[ \norm{\bfu}_{C([0,T];L^\infty_x)}^8 \right] + \mathbb{E}\left[ \norm{\nabla^2 \bfu}_{C([0,T];L^2_x)}^8 \right] \right).
\end{align*}
Due to Inequality~\eqref{eq:enl} (applied with~$l = 2$) and the embedding~$W^{2,2}(\mathbb{T}^2) \hookrightarrow L^\infty(\mathbb{T}^2)$, the analytic solution possesses enough regularity. This and the previous estimates imply
\begin{align*}
\mathbb{E}\left[ \norm{R_2(m)}_{L^2_x}^2 \right] \leq C (\Delta t)^3.
\end{align*}
Analogously, one obtains similar estimates for the other cases, so that in total:
\begin{align*}
 \sum_{\eta=1,\, \eta \neq 1,6}^7\mathbb{E}\left[ \frac{1}{\Delta t} \sum_{m=0}^{M-1} \norm{R_\eta(m)}_{L^2_x}^2  \right] &\leq C \Delta t.
\end{align*}

It remains to check the Cases~$\eta = 1$ and~$\eta = 6$. Both rely on the same argument: a martingale inequality. Before we can use a martingale estimate, we need to delay the evaluation time of the error from~$t_{m+1}$ to~$t_m$. For $\eta \in\{1,6\}$, we estimate
\begin{align*}
&\mathbb{E}\left[ \max_{1\leq n\leq M} \sum_{m=0}^{n-1} \int_{\mt} R_\eta(m) \cdot \bfe_{m+1} \dx \right] \\
&\leq \mathbb{E}\left[ \max_{1\leq n\leq M} \sum_{m=0}^{n-1} \int_{\mt} R_\eta(m) \cdot \bfe_{m} \dx   \right]  + \mathbb{E}\left[ \max_{1\leq n\leq M} \sum_{m=0}^{n-1} \int_{\mt} R_\eta(m) \cdot (\bfe_{m+1} - \bfe_{m} ) \dx   \right].
\end{align*}
Now, the first term is well-prepared for the application of the Burkholder-Davis-Gundy's inequality, while the second term can be handled with standard arguments. 

{\underline{$\eta = 1$}.}
We start with the martingale part: let us verify that
\begin{align} \label{eq:martingale-R1}
\{1,\ldots,M\} \ni n \mapsto  \sum_{m=0}^{n-1}  \int_{\mt} R_1(m) \cdot \bfe_{m} \dx 
\end{align}
is indeed a real-valued martingale with respect to the filtration generated by the Wiener increments up to time~$t_m$, i.e.,~$\mathfrak{F}_{t_m}= \sigma\left( W_k(t) | t \leq t_m, k \in \{1,\ldots, K\}  \right)$. To shorten the notation, we write~$\mathfrak{F}_{m}$ instead of $\mathfrak{F}_{t_m}$. 

Let $s \in \{1,\ldots, M\}$ such that $s > n$. Then, using the linearity of conditional expectations, $\mathfrak{F}_{m+1}$-measurability of~$R_1(m)$, $\mathfrak{F}_{m}$-measurability of~$\bfe_m$ and the tower property of conditional expectations
\begin{align*}
&\mathbb{E}\left[\sum_{m=0}^{n-1}  \int_{\mt} R_1(m) \cdot \bfe_{m} \dx \bigg| \mathfrak{F}_s \right] \\
&\qquad = \sum_{m=0}^{s-1} \mathbb{E}\left[  \int_{\mt} R_1(m) \cdot \bfe_{m} \dx \bigg| \mathfrak{F}_s \right] +  \sum_{m=s}^{n-1} \mathbb{E}\left[  \int_{\mt} R_1(m) \cdot \bfe_{m} \dx \bigg| \mathfrak{F}_s \right] \\
&\qquad= \sum_{m=0}^{s-1}  \int_{\mt} R_1(m) \cdot \bfe_{m} \dx  +  \sum_{m=s}^{n-1} \mathbb{E}\left[  \int_{\mt} \mathbb{E}\left[  R_1(m) \big| \mathfrak{F}_{m} \right] \cdot \bfe_{m} \dx  \bigg| \mathfrak{F}_s \right].
\end{align*}
Lastly, notice that $ \mathbb{E}\left[  R_1(m) \big| \mathfrak{F}_{m} \right] = 0$, since $R_1$ is defined in terms of an It\^o-integral. This implies that~\eqref{eq:martingale-R1} defines a $(\mathfrak{F}_m)_{m=1}^M$-martingale.

Therefore, applying Burkholder-Davis-Gundy's inequality yields
\begin{align*}
\mathbb{E}\left[ \max_{1\leq n\leq M} \sum_{m=0}^{n-1} \int_{\mt} R_1(m) \cdot \bfe_{m} \dx   \right] \leq C \mathbb{E}\left[ \left( \sum_{m=0}^{M-1} \left( \int_{\mt} R_1(m) \cdot \bfe_{m} \dx \right)^2    \right)^{1/2} \right].
\end{align*}
We continue with an application of H\"older's and weighted Young's inequalities, which show, for arbitrary~$\delta > 0$,
\begin{align*}
&\mathbb{E}\left[ \left( \sum_{m=0}^{M-1} \left( \int_{\mt} R_1(m) \cdot \bfe_{m} \dx \right)^2    \right)^{1/2} \right]  \leq \delta \mathbb{E}\left[ \max_{1\leq n \leq M} \norm{\bfe_{n}}_{L^2_x}^2 \right] + C(\delta) \mathbb{E}\left[ \sum_{m=0}^{M-1} \norm{R_1(m)}_{L^2_x}^2 \right].
\end{align*}

Next, we investigate the remaining term: H\"older's and weighted Young's inequalities imply, for arbitrary~$\delta > 0$,
\begin{align*}
&\mathbb{E}\left[ \max_{1\leq n\leq M} \sum_{m=0}^{n-1} \int_{\mt} R_1(m) \cdot (\bfe_{m+1} - \bfe_{m} ) \dx   \right] \\
&\qquad \leq \delta \mathbb{E}\left[  \sum_{m=0}^{M-1} \norm{\bfe_{m+1} - \bfe_{m}}_{L^2_x}^2  \right] + C(\delta) \mathbb{E}\left[  \sum_{m=0}^{M-1} \norm{R_1(m)}_{L^2_x}^2  \right].
\end{align*}
Applying It\^o's isometry, we find that
\begin{align*}
 \mathbb{E}\left[  \sum_{m=0}^{M-1} \norm{R_1(m)}_{L^2_x}^2  \right] &=  \mathbb{E}\left[  \sum_{m=0}^{M-1} \sum_{k=1}^K \int_{t_{m}}^{t_{m+1}} \norm{ (\bfsigma_k\cdot \nabla )\big(\bfu - \bfu(t_m) \big)}_{L^2_x}^2 \dd t  \right].
\end{align*}
Therefore, using Corollary~\ref{cor:uholder} applied with~$l=2$, we infer that
\begin{align*}
 \mathbb{E}\left[  \sum_{m=0}^{M-1} \norm{R_1(m)}_{L^2_x}^2  \right] \leq C (\Delta t)^{2\alpha} \mathbb{E}\left[ \norm{\nabla \bfu}_{C^\alpha([0,T];L^2_x)}^2 \right] \leq C (\Delta t)^{2\alpha} .
\end{align*}
In total, we have shown that, for arbitrary~$\delta >0$, 
\begin{align*}
&\mathbb{E}\left[ \max_{1\leq n\leq M} \sum_{m=0}^{n-1} \int_{\mt} R_1(m) \cdot \bfe_{m+1} \dx \right] \\
&\qquad \leq \delta \mathbb{E}\left[  \sum_{m=0}^{M-1} \norm{\bfe_{m+1} - \bfe_{m}}_{L^2_x}^2  \right] + \delta \mathbb{E}\left[ \max_{1\leq n \leq M} \norm{\bfe_{n}}_{L^2_x}^2 \right]  + C(\delta) (\Delta t)^{2\alpha}.
\end{align*}

{\underline{$\eta = 6$}.} Similar to previous case, one checks that 
\begin{align*}
\{1,\ldots,M\} \ni n \mapsto  \sum_{m=0}^{n-1}  \int_{\mt} R_6(m) \cdot \bfe_{m} \dx 
\end{align*}
is a $(\mathfrak{F}_m)_{m=1}^M$-martingale. This can be done by utilising the identity: $\E[\Delta W_k\Delta_m W_l]=\Delta t \, \delta_{kl}$, where $\delta_{kl}$ denotes the Kronecker delta function, i.e., $\delta_{kl} = 1$ if $k=l$ and $\delta_{kl}=0$ if $k\neq l$; the tower property of conditional expectations; and the independence of Wiener increments. Therefore, we can follow the same argumentation as in the Case~$\eta=1$ to deduce
\begin{align*}
&\mathbb{E}\left[ \max_{1\leq n\leq M} \sum_{m=0}^{n-1} \int_{\mt} R_6(m) \cdot \bfe_{m+1} \dx \right] \\
&\qquad \leq \delta \mathbb{E}\left[  \sum_{m=0}^{M-1} \norm{\bfe_{m+1} - \bfe_{m}}_{L^2_x}^2  \right] + \delta \mathbb{E}\left[ \max_{1\leq n \leq M} \norm{\bfe_{n}}_{L^2_x}^2 \right]  + C(\delta) \mathbb{E}\left[  \sum_{m=0}^{M-1} \norm{R_6(m)}_{L^2_x}^2  \right].
\end{align*}
Lastly, using H\"older's inquality and moment estimates for Wiener increments, it holds
\begin{align*}
\mathbb{E}\left[  \sum_{m=0}^{M-1} \norm{R_6(m)}_{L^2_x}^2  \right] &\leq \frac{K^2}{2} \sum_{k=1}^K\sum_{\ell=1}^K \mathbb{E}\left[  \sum_{m=0}^{M-1} \norm{(\bfsigma_k\cdot\nabla)[(\bfsigma_\ell \cdot \nabla )\bfu (t_m)]}^2 \abs{\Delta_m W_k \Delta_m W_\ell}^2  \right] \\
&\quad +\frac{K}{2}  \sum_{k=1}^K \mathbb{E}\left[  \sum_{m=0}^{M-1} \norm{(\bfsigma_k\cdot\nabla)[(\bfsigma_k\cdot \nabla )\bfu (t_m)]}_{L^2_x}^2 (\Delta t)^2 \right]\\
&\leq C (\Delta t) \left( \mathbb{E}\left[ \norm{\nabla^2 \bfu}_{C([0,T];L^2_x)}^4 \right]\right)^{1/2} \leq C (\Delta t),
\end{align*}
where the last estimate follows from Inequality~\eqref{eq:enl} with~$l=2$.

At this point, we have finished the investigation of the first error contribution of~$\mathrm{III}$, which in summary reads: for arbitrary~$\delta >0$,
\begin{align}
\mathrm{III}_a \leq \delta \mathbb{E}\left[  \sum_{m=0}^{M-1} \norm{\bfe_{m+1} - \bfe_{m}}_{L^2_x}^2  \right] + \delta \mathbb{E}\left[ \max_{1\leq n \leq M} \norm{\bfe_{n}}_{L^2_x}^2 \right]  + C(\delta) (\Delta t)^{2\alpha}.
\end{align} 

{\bf Ad III$_\mathbf{b}$.} We shortly recall the definition of~$\mathrm{III}_b$:
\begin{align*}
\mathrm{III}_b &= \mathbb{E}\left[ \max_{1\leq n\leq M} \sum_{m=0}^{n-1} \int_{\mt} \sum_{k=1}^K (\bfsigma_k\cdot\nabla )\bfe_{m+\frac{1}{2}}\Delta_m W_k\cdot \bfe_{m+1} \dx   \right].
\end{align*}
Utilising $\int_{\mt} (\bfsigma_k \cdot \nabla)\bfa \cdot \bfa \,\dd x = 0$ for $\bfa \in \{ \bfe_{m},\bfe_{m+1}\}$, we find that
\begin{align*}
\mathrm{III}_b &= \frac{1}{2} \mathbb{E}\left[ \max_{1\leq n\leq M} \sum_{m=0}^{n-1} \int_{\mt} \sum_{k=1}^K (\bfsigma_k\cdot\nabla )\bfe_{m}\Delta_m W_k\cdot (\bfe_{m+1} -\bfe_{m})  \dx   \right]. 
\end{align*}
An application of H\"older's and weighted Young's inequalities, together with the tower property of conditional expectations and the independence of Wiener increments, imply (for arbitrary~$\beta >0$)
\begin{align*}
\mathrm{III}_b &\leq \frac{1}{2} \mathbb{E}\left[  \sum_{m=0}^{M-1} \bigg\| \sum_{k=1}^K (\bfsigma_k\cdot\nabla )\bfe_{m}\Delta_m W_k \bigg\|_{L^2_x} \norm{\bfe_{m+1} - \bfe_{m}}_{L^2_x}  \right] \\
&\leq \frac{\beta}{4}  \mathbb{E}\left[  \sum_{m=0}^{M-1} \norm{\bfe_{m+1} - \bfe_{m}}_{L^2_x}^2  \right] +  \frac{1}{4 \beta}  \mathbb{E}\left[  \sum_{m=0}^{M-1} \bigg\| \sum_{k=1}^K (\bfsigma_k\cdot\nabla )\bfe_{m}\Delta_m W_k \bigg\|_{L^2_x}^2  \right] \\
&=  \frac{\beta}{4}  \mathbb{E}\left[  \sum_{m=0}^{M-1} \norm{\bfe_{m+1} - \bfe_{m}}_{L^2_x}^2  \right] +  \frac{1}{4 \beta}  \mathbb{E}\left[  \Delta t  \sum_{m=0}^{M-1} \sum_{k=1}^K \norm{(\bfsigma_k\cdot\nabla )\bfe_{m}}_{L^2_x}^2  \right].
\end{align*}
Applying H\"older's inequality once more to the second term, we arrive at the final estimate:
\begin{align*}
\mathrm{III}_b \leq \frac{\beta}{4}  \mathbb{E}\left[  \sum_{m=0}^{M-1} \norm{\bfe_{m+1} - \bfe_{m}}_{L^2_x}^2  \right] +  \frac{1}{4 \beta}  \sum_{k=1}^K |\bfsigma_k|^2  \mathbb{E}\left[   \Delta t  \sum_{m=0}^{M-1}  \norm{\nabla \bfe_{m}}_{L^2_x}^2\right].
\end{align*}
As indicated in the arguments below \eqref{eq:pre-gronwall},
Assumption~\eqref{ass:vis-dom-noise} allows us to absorb both terms
to conclude the proof.

\end{proof}

\section{Numerical simulations} \label{sec:numerical-experiments}
In this section, we conduct two sets of numerical experiments: the first set of experiments~(SOE--1) is motivated by the use of different noises for the modelling of stochastic fluids. While the convergence analysis of our algorithm for additive and multiplicative noises is left open, we numerically explore the effect of these noise types on the kinetic energy; the second set of experiments~(SOE--2) generates numerical evidence on possible extensions of Theorem~\ref{thm:3.1} to fully discrete algorithms as well as relaxed data assumptions. The main research questions are:
\begin{enumerate}
\item Can the torus~$\mathbb{T}^2$ be replaced by a Lipschitz domain~$\mathcal{D}\subset \mathbb{R}^2$? 
\item Can periodic boundary conditions be replaced by \textit{no-slip} boundary conditions? 
\item Can constant transport fields $\bfsigma_k$, $k=1, \ldots,K$, be replaced by non-constant ones?
\end{enumerate}
We implement a fully discrete algorithm based on the time-discretisation~\eqref{tdiscr}, and combined with the mixed Finite Element Method with anti-symmetrised (non-)linear convection. We test the algorithm on various parameter configurations, alternating initial conditions and transport fields. We monitor the sensitivity of the algorithm with respect to the temporal discretisation, while keeping the spatial discretisation fixed. This allows us to determine the algorithm's temporal rate of convergence experimentally.

The implementation of the algorithm as well as the code used for conducting the experiments is available at~\href{https://github.com/joernwichmann/NSE-Transport}{https://github.com/joernwichmann/NSE-Transport}; it uses the open-source finite element package \textit{Firedrake}~\cite{FiredrakeUserManual}, which itself heavily relies on \textit{PETSc}~\cite{petsc-web-page}.

The section is structured as follows: we first introduce the fully discrete algorithm; then we provide details on the experiments, i.e., we specify concrete choices for the parameters, explain what solution statistics are collected as well as how the experiments' data are generated; we close the section by presenting the results and relating them to the research questions.
   
\subsection{Fully discrete algorithm}

\subsubsection{Finite Element Method}  
Details on the Finite Element Method can be found e.g. in the books~\cite{brezzi1,brenner1,Ern2021} and the references therein. We shortly introduce our notation. Let $\mathcal{D} \subset \mathbb{R}^2$ be a Lipschitz domain.

Let $\mathcal{T}_h$ denote a regular partition (triangulation)
of~$\mathcal{D}$ (no hanging nodes), which consists of closed $n$-simplices called \emph{elements}. For each element ($n$-simplex) $K\in \mathcal{T}_h$, we denote by $h_K$ the diameter of $K$. We define the maximal mesh-size by  $h := \max_{K \in \mathcal{T}_h} h_K$. 

For $r \in \setN_0$, let $\mathscr{P}_r(K)$ denote polynomials on $K$ of degree less than or equal to $r$; we define the vector-valued finite element spaces by
\begin{subequations} \label{def:velocity-FEM}
\begin{align}\label{def:Xh}
    X_h &:= \set{\bfv \in W^{1,\infty}(\mathcal{D}) \,:\, \bfv|_K \in (\mathscr{P}_r(K))^2 
      \,\,\forall K\in \mathcal{T}_h},\\ \label{def:Vh}
    V_h &:= X_h \cap W^{1,2}_{0}(\mathcal{D}).
\end{align} 
\end{subequations}
Similarly, we define the scalar-valued finite element spaces by
\begin{subequations} \label{def:pressure-FEM}
\begin{align}\label{def:Yh}
    Y_h &:= \set{q \in L^\infty(\mathcal{D}) \,:\, q|_K \in \mathscr{P}_r(K) \,\,\forall K\in \mathcal{T}_h},\\ \label{def:Qh}
   Q_h &:= Y_h \cap \left\{q \in L^\infty(\mathcal{D}) \,:\, \int_{\mathcal{D}} q \,\dd x  = 0\right\}.
\end{align} 
\end{subequations}
The vector-valued and scalar-valued finite element spaces are the approximate spaces for velocity and pressure, respectively. Additionally, the spaces~$V_h$ and $Q_h$ incorporate no-slip boundary conditions for the velocity and mean-value free condition for the pressure, respectively. The space of discretely divergence-free velocity is given by: 
\begin{align*}
 V_h^{\circ} := \left\{\bfv \in V_h\,: \, \forall q \in Q_h \,\int_{\mathcal{D}} \Div \bfv \,q \, \dd x = 0 \right\}.
\end{align*}

A mixed finite element pair $(V_h,Q_h)$ is called \textit{stable} if the so-called Ladyshenskaya--Babuska--Brezzi~\eqref{eq:LBB} condition is satisfied, i.e., there exists a constant~$\beta >0$ (independent of $h$) such that 
\begin{align} \tag{LBB} \label{eq:LBB}
\inf_{q \in Q_h\backslash \{0\}}\sup_{\bfv \in V_h\backslash \{0\} } \frac{\int_{\mathcal{D}}\Div \bfv \, q \, \dd x }{\norm{\nabla \bfv}_{L^2_x} \norm{q}_{L^2_x} } \geq \beta.
\end{align}
The~\eqref{eq:LBB} condition is a necessary condition for showing well-posedness of mixed formulations. Without it, it is impossible to uniquely reconstruct the pressure once the velocity has been constructed.

\subsubsection{Recovering the energy identity} We have seen that the analytic and semi-discrete solutions satisfy pathwise energy identities; see~\eqref{eq:en0} and~\eqref{lem:3.1a0}. However, these identities heavily rely on the incompressibility of the solutions~$\bfu$ and~$(\bfu_m)_{m=1}^M$, and the transport fields~$\bfsigma_k$, $k=1,\ldots,K$. Mixed finite elements generally fail to preserve the (pointwise) incompressibility constraint, which hinders an extension of the pathwise energy identities to fully discrete algorithms. We recover the energy identity by replacing the tri-linear term
\begin{align*}
B(\bfu,\bfv,\bfw) := \int_{\mathcal{D}}(\bfu \cdot \nabla )\bfv \cdot \bfw  \dx, \qquad \bfu \in L^\infty(\mathcal{D}), \qquad \bfv, \bfw \in W^{1,2}(\mathcal{D})
\end{align*}
with 
\begin{align} \label{eq:def-C}
C(\bfu,\bfv,\bfw) := \frac{1}{2} \int_{\mathcal{D}}(\bfu \cdot \nabla )\bfv \cdot \bfw  \dx  -  \frac{1}{2} \int_{\mathcal{D}} (\bfu \cdot \nabla) \bfw  \cdot  \bfv  \dx.
\end{align}
This approach was originally introduced by Temam in~\cite{Temam1968}. Most importantly, $C(\bfu, \cdot,\cdot)$ is anti-symmetric independently of the incompressibility of the transport field~$\bfu$. Additionally, the change has no effect for incompressible transport fields that vanish on the boundary; indeed, if $\Div \bf u = 0$ and $\bfu$ vanishes on the boundary, then integration by parts shows that~$B(\bfu,\bfv,\bfw) = C(\bfu,\bfv,\bfw)$  for all $\bfv, \bfw \in W^{1,2}(\mathcal{D})$.

\subsubsection{The algorithm} Let $(V_h,Q_h)$ be a stable mixed finite element pair. The fully discrete algorithm reads as follows: Iteratively construct the sequence of approximate velocity~$(\bfu_{m}^h)_{m=1}^M \subset V_h$ and pressure~$(\pi_{m}^h)_{m=1}^M \subset Q_h$ such that for all $\bfvarphi^h \in V_h$, $q^h \in Q_h$, $m = 0,\ldots, M-1$ and $\mathbb{P}$-a.s.
\begin{subequations}\label{algo:full-disc}
\begin{align}
&\begin{aligned} \label{algo:velo}
&\int_{\mathcal{D}}\bfu_{m+1}^h \cdot \bfvarphi^h \dx +\Delta t\bigg(\mu \int_{\mathcal{D}} \nabla\bfu_{m+1}^h:\nabla\bfphi^h \dx  - \int_{\mathcal{D}}\pi_{m+1}^h \, \Div \bfvarphi^h \dx  \bigg)\\
&\hspace{5em} + \Delta t \,C(\bfu_{m+1/2}^h, \bfu_{m+1/2}^h, \bfvarphi^h) \\
&\qquad =\int_{\mathcal{D}}\bfu_{m}^h \cdot\bfvarphi^h \dx+\sum_{k=1}^K C(\bfsigma_k , \bfu_{m+1/2}^h, \bfvarphi^h)\Delta_m W_k ,
\end{aligned}\\ \label{algo:pres}
&\int_{\mathcal{D}} \Div \bfu_{m+1/2}^h \, q^h \dx = 0,
\end{align}
\end{subequations}
where $\bfu_{m+1/2}^h = \tfrac{1}{2} (\bfu_{m+1}^h + \bfu_{m}^h)$. Well-posedness of this algorithm can be derived in the same way as for the semi-discretisation~\eqref{tdiscr}. Moreover, the approximate velocity satisfies the same pathwise energy identity as the solution to the semi-discretisation, which can be seen by choosing~$\bfvarphi^h = \bfu_{m+1/2}^h$ in~\eqref{algo:velo}. Indeed, this choice yields
\begin{align*} 
\tfrac{1}{2}\norm{\bfu_{m+1}^h}_{L^2_x}^2 + \tfrac{1}{4} \Delta t \mu \norm{\nabla \bfu_{m+1}^h}_{L^2_x}^2 + \Delta t\, \mu  \norm{ \nabla \bfu_{m+1/2}^h}_{L^2_x}^2  = \tfrac{1}{2}\norm{\bfu_{m}^h}_{L^2_x}^2  + \tfrac{1}{4} \Delta t \mu \norm{\nabla \bfu_{m}^h}_{L^2_x}^2,
\end{align*}
where we used the anti-symmetry of $C(\bfu,\cdot,\cdot)$ for $\bfu \in \{ \bfu_{m+1/2}^h, \bfsigma_1, \ldots, \bfsigma_K \}$. 

Neglecting the gradient contribution of the arithmetic mean, we observe that the 
\textit{enhanced} energy
\begin{align*}
\{0,\ldots,M\} \ni m \mapsto \tfrac{1}{2}\norm{\bfu_{m}^h}_{L^2_x}^2+ \tfrac{1}{4} \Delta t \mu \norm{\nabla \bfu_{m}^h}_{L^2_x}^2
\end{align*}
is monotonically decreasing. The enhanced energy consists of two terms: the kinetic energy and an algorithmically induced energy. Since the algorithmically induced energy is scaled by the time step size, it will become negligible eventually.


\subsection{Experiment setup}
The presentation of this section follows Section~7 of the last author's article~\cite{Le2024}.

\subsubsection{Model parameters}
We use the following fixed model parameters:
\begin{itemize}
\item two-dimensional unit square $\mathcal{D} = (0,1)^2$;
\item final time $T=1$;
\item viscosity $\mu = 1$;
\item single stochastic transport field $K = 1$.
\end{itemize}

\subsubsection{Spatial discretisation} \label{subsubsec:spatial-disc}
\begin{figure}
\includegraphics[width=0.5\textwidth]{\grSrc{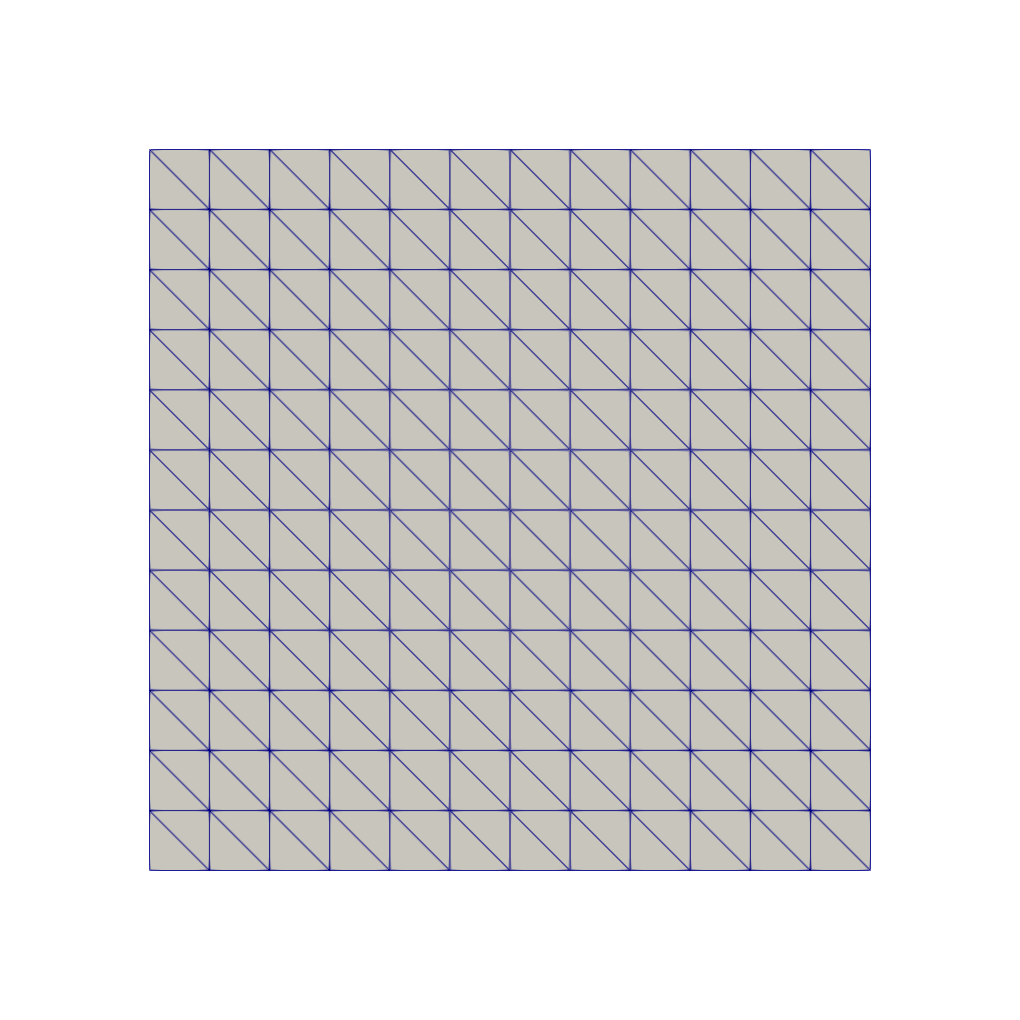}}
\caption{Triangulation of the unit square that is used in all experiments.}
\label{fig:Mesh}
\end{figure}

We discretise space with Taylor--Hood elements -- a stable mixed finite element pair; see e.g.~\cite{Taylor1973} -- generated by the triangulation presented in Figure~\ref{fig:Mesh}. The pair consists of continuous velocity and continuous pressure approximate spaces, which are given by~\eqref{def:Xh} and~\eqref{def:Yh} with $r = 2$ and $r = 1$, respectively. Taylor-Hood elements are merely discretely divergence free; thus, generally violating the incompressibility constraint.

\subsubsection{Sampling strategy}
We employ a Monte-Carlo approach for the discretisation of the probability space.

Let $M, L \in \mathbb{N}$ be the number of time-steps and samples, respectively. Realisations of the random vector $(\Delta_m W(\omega_\ell) )_{m=1}^{M}$, $\ell =1,\ldots,L $, are replaced by 
\begin{align} \label{eq:sample-random-inc}
Z_m^\ell \approx \Delta_m W(\omega_\ell), \qquad m =1,\ldots,M,\quad  \ell =1, \ldots, L,
\end{align}
which are generated independently by a pseudo-random number generator.

\subsubsection{Implemented algorithm}
We slightly modify the setting in~\eqref{algo:full-disc} by adding a deterministic force~$\bff$. Given an initial condition~$\bfu_0$, a noise coefficient~$\bfsigma$ and a deterministic forcing~$\bff$, we implement the fully discrete system of equations:
\begin{itemize}
\item (Initialise) For all $\ell =1,\ldots,L$, set $\bfu_0^\ell := \bfu_0$;
\item (Time-stepping) For all $m =0,\ldots,M-1$ and $\ell =1,\ldots,L$, define $\bfu_{m+1}^\ell \in V_h$ and $\pi_{m+1}^\ell \in Q_h$ by iteratively solving, for all $\bfphi^h \in V_h$ and $q^h \in Q_h$, 
\begin{subequations} \label{algo:Experiment-stepping}
\begin{align}
&\begin{aligned} 
&\int_{\mathcal{D}}\bfu_{m+1}^\ell \cdot \bfvarphi^h \dx +\Delta t\bigg(\mu \int_{\mathcal{D}} \nabla\bfu_{m+1}^\ell:\nabla\bfphi^h \dx  - \int_{\mathcal{D}}\pi_{m+1}^\ell \, \Div \bfvarphi^h \dx  \bigg)\\
&\hspace{5em} + \Delta t \,C(\bfu_{m+1/2}^\ell, \bfu_{m+1/2}^\ell, \bfvarphi^h) \\
&\qquad =\int_{\mathcal{D}}\bfu_{m}^\ell \cdot\bfvarphi^h \dx+ \Delta t\, \int_{\mathcal{D}} \bff \cdot \bfphi^h\, \dd x  +\Xi(\bfsigma,\bfu_{m+1/2}^\ell,\bfphi^h) Z_m^\ell ,
\end{aligned}\\ 
&\int_{\mathcal{D}} \Div \bfu_{m+1/2}^\ell \, q^h \dx = 0,
\end{align}
\end{subequations}
where $\bfu_{m+1/2}^\ell = \tfrac{1}{2}(\bfu_{m+1}^\ell+\bfu_{m}^\ell)$, and the noise coefficient is either
\begin{itemize}
\item (additive noise) $\Xi(\bfsigma,\bfu,\bfphi) = \int_{\mathcal{D}} \bfsigma \cdot \bfphi \, \dd x$,
\item (multiplicative noise) $\Xi(\bfsigma,\bfu,\bfphi) = \norm{\bfsigma}_{L^2_x} \int_{\mathcal{D}} \bfu \cdot \bfphi \, \dd x$,
\item (transport noise) $\Xi(\bfsigma,\bfu,\bfphi) = C(\bfsigma,\bfu,\bfphi)$ where  $C$ is given by~\eqref{eq:def-C} or 
\item (no noise) $\Xi(\bfsigma,\bfu,\bfphi) = \bf0$.
\end{itemize}
\end{itemize}
The proposed fully discrete algorithm uses a semi-implicit time stepping scheme,  which requires us to solve a quadratic non-linear system of equations in each iteration. While this increases the solving time, we gain more control on the generated velocity approximation due to the pathwise energy identity. Designing efficient solvers for non-linear system of equations is in itself an important research field. However, it isn't the scope of this article to discuss the algorithm's solve step; instead we assume the solve is done by an oracle. Later we will give details on the oracle used in our numerical simulations.

\subsubsection{Monitored statistics}
Motivated by Theorem~\ref{thm:3.1}, we exclusively trace the dependence of velocity and pressure on the time resolution either expressed in terms of the number of steps~$M$, or the step size~$\Delta t = T/M$. We monitor two statistics: kinetic energy evolution and time convergence.

We will trace the time evolution of the expected kinetic energy 
\begin{align*}
\{0,\ldots,M\} \ni m \mapsto \mathbb{E}\left[ \tfrac{1}{2} \norm{\bfu_m^h(M)}_{L^2_x}^2 \right].
\end{align*}
Here we use $(\bfu_m^h(M))_{m=1}^M$ to denote the solution of~\eqref{algo:full-disc}, where we emphasize the dependence of the solution on the time resolution.   

We measure time convergence of the algorithm with respect to the following velocity and pressure distances:
\begin{itemize}
\item $L^2\big(\Omega;L^\infty(0,T;L^2(\mathcal{D}))\big): \quad \mathbb{E}\left[ \max_{m\leq M} \norm{ \bfu (t_m^M) - \bfu_m^h(M)}_{L^2_x}^2 \right]$;
\item $L^2\big(\Omega;L^2(0,T;W^{1,2}_0(\mathcal{D}))\big): \quad \mathbb{E}\left[ \sum_{m=1}^M \Delta t\, \norm{ \nabla \bfu (t_m^M) - \nabla \bfu_m^h(M)}_{L^2_x}^2 \right]$;
\item $L^2\big(\Omega;L^2(0,T;L^2(\mathcal{D}))\big): \quad \mathbb{E}\left[ \sum_{m=1}^M \Delta t \, \norm{ \pi (t_m^M) - \pi_m^h(M)}_{L^2_x}^2 \right]$;
\item $L^2\big(\Omega;W^{-1,2}(0,T;L^2(\mathcal{D}))\big): \quad \mathbb{E}\left[ \sum_{m=1}^M \Delta t \, \Big\| \int_0^{t_m^M} \pi \dd s - \sum_{s=1}^m \Delta t\, \pi_s^h(M) \Big\|_{L^2_x}^2 \right]$.
\end{itemize}
The velocity distances are the natural extensions of the error measures introduced in Theorem~\ref{thm:3.1}. It should be noted, the time convergence in the pressure is still an open problem. The observation of the pressure distances provides us with numerical evidence for convergence.

Numerically, the kinetic energy as well as the velocity and pressure distances are inaccessible for two reasons: expectations can only be computed approximately and the analytic solution is unknown. We overcome these issues by using an empirical approximation of expectations and replacing the analytic solution by a more refined approximation.

\subsubsection{Lifting vectors to functions}
The dependence of the function spaces on the time resolution can be removed by first lifting vectors to functions and afterwards comparing functions instead of vectors.
 
Let $E$ be a vector space and $M \in \mathbb{N}$. Let $U=(u_m)_{m=0}^M$ with $u_m \in E$ for all $m \in \{0,\ldots,M\}$. We define the piecewise constant interpolation by 
\begin{align*}
\mathscr{L}[U](t) &:= u_M + \sum_{m=1}^M (u_{m-1} - u_M) \chi_{J_m^M}(t),
\end{align*}
where $\chi$ denotes the indicator function, $J_m^M = [t_{m-1}^M,t_m^M)$ and $t_m^M = T \, m/M$.

\subsubsection{Comparison on different time scales} Instead of studying the distance of our approximation at a fixed time resolution and the (unknown) analytic solution, we replace the analytic solution by a more refined approximation.

Let $M_c$, $M_f \in \mathbb{N}$ be 'coarse' and 'fine' time discretisations, respectively. Moreover, let $\bfU(M_c) = (\bfu_m^h(M_c))_{m=1}^{M_c}$, $ \bfU(M_f) = (\bfu_m^h(M_f))_{m=1}^{M_f}$ and $\Pi(M_c) = (\pi_{m}^h(M_c))_{m=1}^{M_c}$, $\Pi(M_f) = (\pi_m^h(M_f)_{m=1}^{M_f}$ be the corresponding 'coarse' and 'fine' velocity and pressure vectors, respectively. We define the following distances on vectors:
\begin{subequations} \label{def:choice-of-distance}
\begin{align}
\mathrm{d}_{L^\infty_t L^2_x}^{\mathrm{vel}}\big( \bfU(M_c), \bfU(M_f) \big) &:= \norm{\mathscr{L}[\bfU(M_c)] - \mathscr{L}[\bfU(M_f)] }_{L^\infty_t L^2_x}, \\
\mathrm{d}_{L^2_t W^{1,2}_{0,x}}^{\mathrm{vel}}\big( \bfU(M_c), \bfU(M_f) \big) &:= \norm{\nabla \mathscr{L}[\bfU(M_c)] - \nabla \mathscr{L}[\bfU(M_f)]  }_{L^2_t L^2_x}, \\
\mathrm{d}_{L^2_t L^2_x}^{\mathrm{pre}}\big( \Pi( M_c) , \Pi(M_f )\big) &:= \norm{\mathscr{L}[\Pi(M_c)] - \mathscr{L}[\Pi(M_f)] }_{L^2_t L^2_x}, \\
\mathrm{d}_{W^{-1,2}_t L^2_x}^{\mathrm{pre}}\big( \Pi( M_c) , \Pi(M_f )\big) &:= \bigg\|\int_0^{\cdot} \mathscr{L}[\Pi(M_c)](s) - \mathscr{L}[\Pi(M_f)](s) \,\dd s \bigg\|_{L^2_t L^2_x}.
\end{align}
\end{subequations}

\subsubsection{Empirical approximation of expectation}
We replace incomputable probabilistic mean-values by computable empirical approximations. 

Recall that $L$ denotes the sample size. We define the mean kinetic energy evolution by
\begin{align}\label{eq:statistic-energy}
\{0,\ldots,M\} \ni m \mapsto \frac{1}{L} \sum_{\ell=1}^L \tfrac{1}{2} \norm{\bfu_m^\ell(M)}_{L^2_x}^2.
\end{align}

For a given sample index $\ell \in \{1,\ldots,L\}$ and time resolution $M \in \{M_c, M_f\}$, let $\bfU^\ell(M)$ and $\Pi^\ell(M)$ denote approximate velocity and pressure vectors with resolution~$M$ generated by the $\ell$-th sample, respectively. We define empirical convergence measures by
\begin{subequations} \label{eq:statistical-mean}
\begin{align}
\mathrm{E}^{\mathrm{vel},L}_{L^\infty_t L^2_x}(M_c,M_f) &:= \left( \frac{1}{L} \sum_{\ell=1}^L \left( \mathrm{d}_{L^\infty_t L^2_x}^{\mathrm{vel}}\big( \bfU^\ell(M_c), \bfU^\ell(M_f) \big) \right)^2 \right)^{1/2}, \\
\mathrm{E}^{\mathrm{vel},L}_{L^2_t W^{1,2}_{0,x}}(M_c,M_f) &:= \left( \frac{1}{L} \sum_{\ell=1}^L \left( \mathrm{d}_{L^2_t W^{1,2}_{0,x}}^{\mathrm{vel}}\big( \bfU^\ell(M_c), \bfU^\ell(M_f) \big) \right)^2 \right)^{1/2}, \\
\mathrm{E}^{\mathrm{pre},L}_{L^2_t L^2_x}(M_c,M_f) &:= \left( \frac{1}{L} \sum_{\ell=1}^L \left( \mathrm{d}_{L^2_t L^2_x}^{\mathrm{pre}}\big( \Pi^\ell(M_c) , \Pi^\ell(M_f )\big) \right)^2 \right)^{1/2}, \\
\mathrm{E}^{\mathrm{pre},L}_{W^{-1,2}_t L^2_x}(M_c,M_f) &:= \left( \frac{1}{L} \sum_{\ell=1}^L \left( \mathrm{d}_{W^{-1,2}_t L^2_x}^{\mathrm{pre}}\big( \Pi^\ell(M_c) , \Pi^\ell(M_f )\big) \right)^2 \right)^{1/2}.
\end{align}
\end{subequations}

\subsubsection{Simultaneous sampling on different time scales}
We need to ensure that coarse and fine approximates are computed on the same pseudo-random event. Notice that the pseudo-random numbers defined in~\eqref{eq:sample-random-inc} depend on the time resolution. In particular, different time resolutions cannot be sampled independently, as can be seen on the analytic level: the random vectors 
\begin{align*}
(\Delta_m W)_{m=1}^{M_c} \hspace{3em} \text{ and }  \hspace{3em} (\Delta_m W)_{m=1}^{M_f} 
\end{align*}
are correlated, requiring us to modify the generation of the pseudo-random numbers to account for dependencies on different time scales. If $M_f = r M_c$ for some $r \in \mathbb{N}$, then we can write the coarse increments in terms of the fine ones, yielding a simple reconstruction rule:
\begin{align} \label{eq:sample-reconst}
Z_m^{\ell,\mathrm{c}} &= \sum_{n=1}^{r} Z_{(m-1)r +n}^{\ell,\mathrm{f}}, \qquad m =1,\ldots,M_c,\quad \ell=1,\ldots,L.
\end{align}
In other words, the fine pseudo-random numbers fully determine the values of the coarse ones.

\subsubsection{Varying parameters}
We run two sets of experiments: the time evolution of kinetic energy for different noise types and the algorithm's time convergence for transport noise. Before we can state the chosen parameters in each experiment, we need to introduce some vector fields and operators.

Let 
\begin{align*}
\mathrm{Poly}(x,y) &= \begin{pmatrix}
x^2(1-x)^2(2-6y+4y^2)y \\
-y^2(1-y)^2(2-6x+4x^2)x
\end{pmatrix}, \\
\mathrm{Poly_{noBC}}(x,y)& = \mathrm{Poly}(x,y) + \begin{pmatrix}
1\\
1
\end{pmatrix},\\
\mathrm{Poly_{noDiv}}(x,y)& = \mathrm{Poly}(x,y) + \begin{pmatrix}
x(1-x)y(1-y)\\
x(1-x)y(1-y)
\end{pmatrix}.
\end{align*}
The first vector field is smooth, divergence-free and vanishes on the boundary of~$\mathcal{D}$. The second and third vector fields violate boundary conditions and the incompressibility constraint, respectively. Moreover, let
\begin{align*}
\mathrm{Trig}(x,y) &=  \begin{pmatrix}
\sin(2 \pi x)\sin(4 \pi y) \\
-\sin(4\pi x) \sin(2 \pi y)
\end{pmatrix}.
\end{align*}
This vector field is smooth and vanishes on the boundary; but it isn't incompressible.

Additionally, we use the $L^2$-projection onto discretely divergence free velocity with vanishing trace given by
\begin{align*}
L^2_x \ni \bfw \mapsto \mathfrak{P}_h^\mathrm{vel} \bfw := \arg \min_{\bfv \in V_h^\circ} \norm{\bfv - \bfw}_{L^2_x}.
\end{align*}
We access this projection by implementing the discrete Helmholtz decomposition $\mathfrak{P}_h = (\mathfrak{P}_h^\mathrm{vel}, \mathfrak{P}_h^\mathrm{pre}): L^2(\mathcal{D}) \to V_h \times Q_h $ defined by
\begin{alignat*}{2}
&\forall \bfphi^h \in V_h: \quad \int_{\mathcal{D}} \mathfrak{P}_h^\mathrm{vel} \bfw \cdot \bfphi^h\, \dd x - \int_{\mathcal{D}} \mathfrak{P}_h^\mathrm{pre} \bfw \, \Div \bfphi^h \, \dd x &&= \int_{\mathcal{D}} \bfw \cdot \bfphi^h \,\dd x, \\
&\forall q^h \in Q_h: \quad  \int_{\mathcal{D}} \Div  \mathfrak{P}_h^\mathrm{vel} \bfw \, q^h \, \dd x &&= 0.
\end{alignat*}
The first component of this vector-valued operator is the aforementioned $L^2$-projection; the second component is the projection onto discrete gradients.

\textbf{SOE--1: Experimental time evolution of kinetic energy for different noise types.} The first set of experiments investigate the influence of different noise types on the kinetic energy. 

\begin{table}
\begin{tabular}{|c|c|}
\toprule
Noise type & Noise coefficient~$\Xi(\bfsigma,\bfu,\bfphi)$ \\
 \midrule
 additive & $\int_{\mathcal{D}} \bfsigma \cdot \bfphi \, \dd x$  \\
 multiplicative & $ \norm{\bfsigma}_{L^2_x} \int_{\mathcal{D}} \bfu \cdot \bfphi \, \dd x$  \\
 transport & $\tfrac{1}{2} \int_{\mathcal{D}} (\bfsigma \cdot \nabla) \bfu \cdot \bfphi \, \dd x  - \tfrac{1}{2} \int_{\mathcal{D}} (\bfsigma \cdot \nabla)\bfphi  \cdot \bfu \, \dd x $  \\
 no & $\bf0$ \\
 \bottomrule
\end{tabular}
\vspace*{-0.9em}
\caption{Parameter configuration in SOE--1: Choice of noise coefficient for each noise type.}
\label{tab:experiments-energy}
\end{table}

We conduct four experiments by switching between additive, multiplicative, transport or no noise. In Table~\ref{tab:experiments-energy} we present noise coefficients used for each noise type. In all cases, we choose the same noise datum, initial condition and deterministic forcing. We project them to the space of discrete velocities by calling the \textit{firedrake.project} function, which we treat as a blackbox. The noise datum equals the vector field~$\mathrm{Poly}$ scaled by the factor~$10^3$; the initial condition is set to be the $L^2$-projection of the vector field~$\mathrm{Poly}$ onto the space of discretely divergence-free velocity. We additionally scale the initial velocity by a factor of~$10^3$; the deterministic forcing is chosen as the vector field~$\mathrm{Trig}$ scaled by the factor~$10^2$. In summary, we choose the following parameters:
\begin{align*}
\bfsigma = 10^3\, \mathrm{Poly}, \qquad \bfu_0 = 10^3 \, \mathfrak{P}_h^\mathrm{vel} \,\mathrm{Poly}, \qquad  \bff = 10^2\, \mathrm{Trig}.
\end{align*}

\textbf{SOE--2: Experimental time convergence for transport noise.} The second set of experiments test the algorithm's time convergence for different choices of initial conditions and noise data. 

\begin{table}
\begin{tabular}{|c|c|c|}
\toprule
Experiment & Initial condition~$\bfu_0$ & Noise datum~$\bfsigma$\\
 \midrule
 1 & $ \mathrm{Poly}$ & $ \mathrm{Poly}$ \\
 2 & $ \mathrm{Poly}$ & $ \mathrm{Poly_{noDiv}}$ \\
 3 & $ \mathrm{Poly}$ & $ \mathrm{Poly_{noBC}}$ \\
 4 & $ \mathrm{Poly_{noDiv}}$ & $ \mathrm{Poly}$ \\
 5 & $ \mathrm{Poly_{noBC}}$ & $ \mathrm{Poly}$ \\
 \bottomrule
\end{tabular}
\vspace*{-0.9em}
\caption{Parameter configuration in SOE--2: Choice of initial velocity and noise datum in each experiment.}
\label{tab:experiments}
\end{table} 
In Table~\ref{tab:experiments} we state our choice of initial velocity and noise datum in each experiment. We project initial velocity and noise datum to the space of discrete velocities by calling the \textit{firedrake.project} function. In Experiments~1, 2 and 3 we additionally invoke the $L^2$-projection to project the initial condition onto the space of discretely divergence-free velocity. Finally, the noise datum is scaled by a factor of $10^3$ in all experiments and no deterministic forcing $(\bff = 0$) is present.
 
\subsubsection{Summary}
We are ready to present the algorithm used for the sample generation of approximate velocity and pressure, the computation of the (mean) kinetic energy, and the time convergence test for velocity and pressure. The implementation is available at \href{https://github.com/joernwichmann/NSE-Transport}{https://github.com/joernwichmann/NSE-Transport}.

The spatial discretisation is fixed in all experiments (see Section~\ref{subsubsec:spatial-disc}). The first set of experiments~SOE--1 uses $M= 2^9$ time steps and a sample size of $L = 10^5$. The second set of experiments~SOE--2 uses a fine time resolution of $M_f =  2^9$ time steps, coarse time resolutions of $M_c \in \{2^2, 2^3, \ldots, 2^8 \}$ time steps and a sample size of $L = 10^4$. 
\begin{framed}
\begin{algorithm}[H]
\KwData{samples, time scales, space discretisation, varying parameters}
\KwResult{empirical study of kinetic energy evolution and time convergence}
\For{sample in samples}{
generate pseudo-random vector on finest time scale by~\eqref{eq:sample-random-inc}\;
\For{time scale in time scales}{
reconstruct coarse pseudo-random vector from finest one using~\eqref{eq:sample-reconst}\;
initialise time-stepping relative to time scale\;
\For{time step in time steps}{
solve~\eqref{algo:Experiment-stepping}\;
}
}
compute sample kinetic energy\;
compute sample distance of coarse and fine approximates\;
}
compute empirical mean of kinetic energy by~\eqref{eq:statistic-energy}\;
compute empirical mean of distances by~\eqref{eq:statistical-mean}\;
\caption{Pseudo-code used for our numerical simulations.}
\label{algo:Pseudo-code}
\end{algorithm}
\end{framed}
For actually solving~\eqref{algo:Experiment-stepping}, we rely on the \textit{firedrake.solve} function that internally uses the nonlinear solver~\textit{PETSc.SNES} with $10^{-8}$ as absolute and relative tolerance parameters, and the parallel sparse direct solver~\textit{MUMPS}.

\subsection{Results}

\subsubsection{SOE--1: Kinetic energy} \label{sec:mean-kinetic-energy}
The influence of different types of noise (additive, multiplicative, transport and no-noise) on the kinetic energy are shown in Figures~\ref{fig:kinetic},~\ref{fig:stationary-all} and~\ref{fig:energy}: Figure~\ref{fig:kinetic} displays the time evolution of individual energy trajectories as well as their statistics (empirical mean and standard deviation); Figure~\ref{fig:stationary-all} shows an empirical approximation of the stationary distributions; and Figure~\ref{fig:energy} illustrates the effect of each noise structure on the kinetic energy evolution and its stationary distribution individually. We summarize our observations:

\begin{figure}
\begin{subfigure}[t]{0.99\textwidth} 
\includegraphics[scale=0.425]{\grSrc{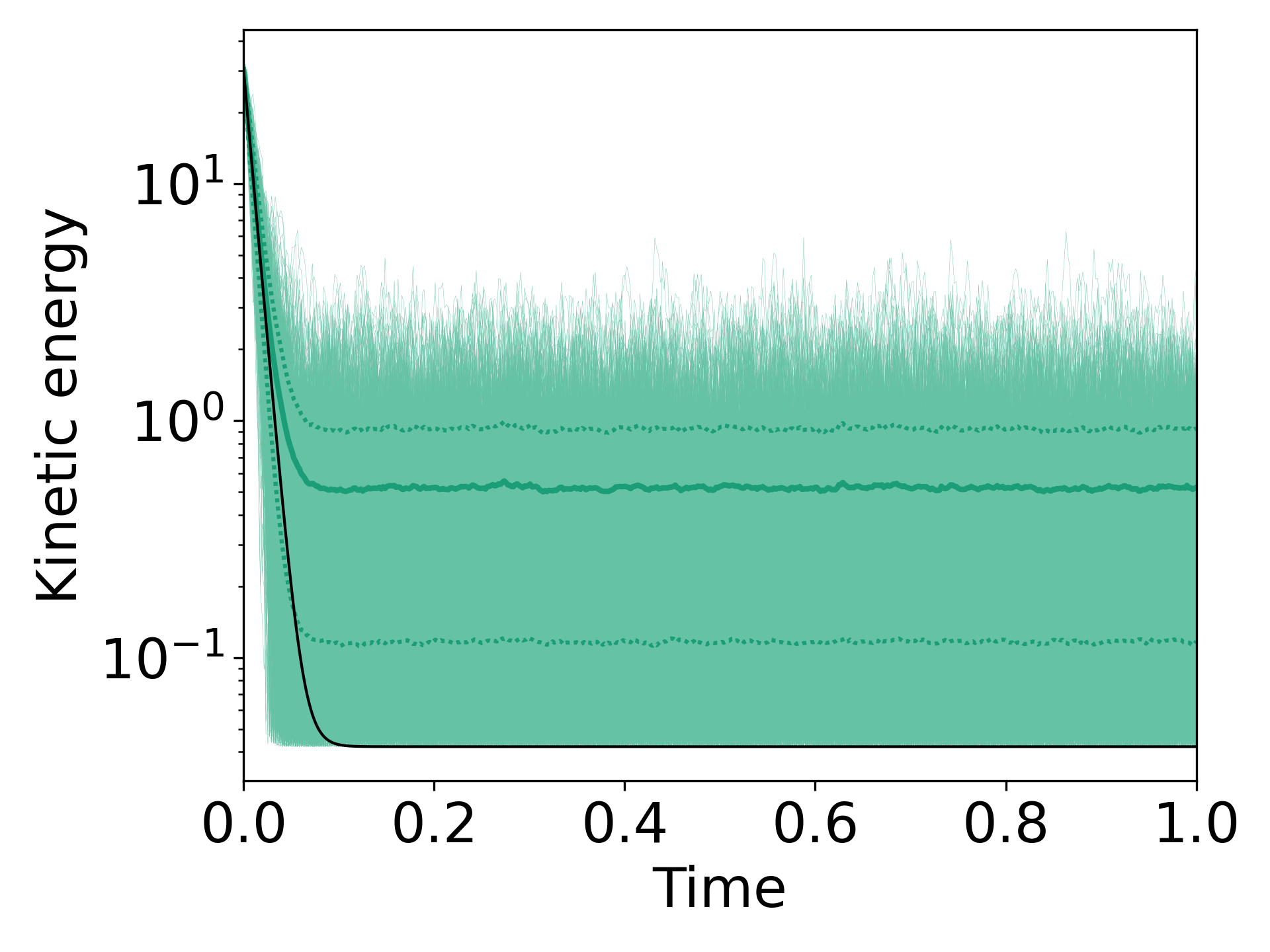}}%
\includegraphics[scale=0.425]{\grSrc{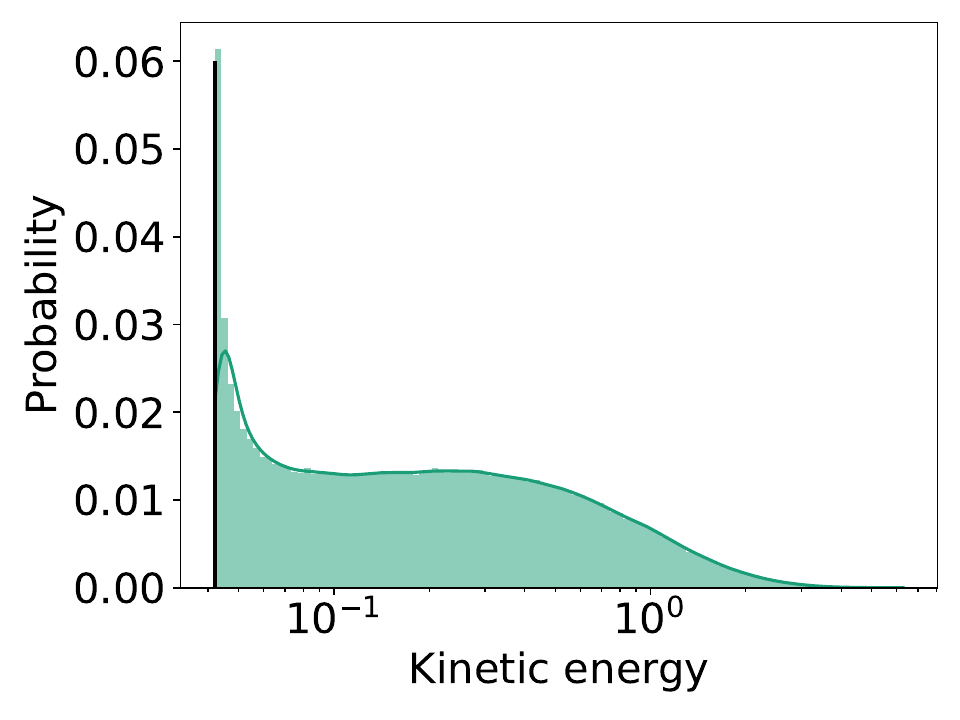}}
        \caption{Additive noise}
\end{subfigure}
\begin{subfigure}[t]{0.99\textwidth} 
\includegraphics[scale=0.425]{\grSrc{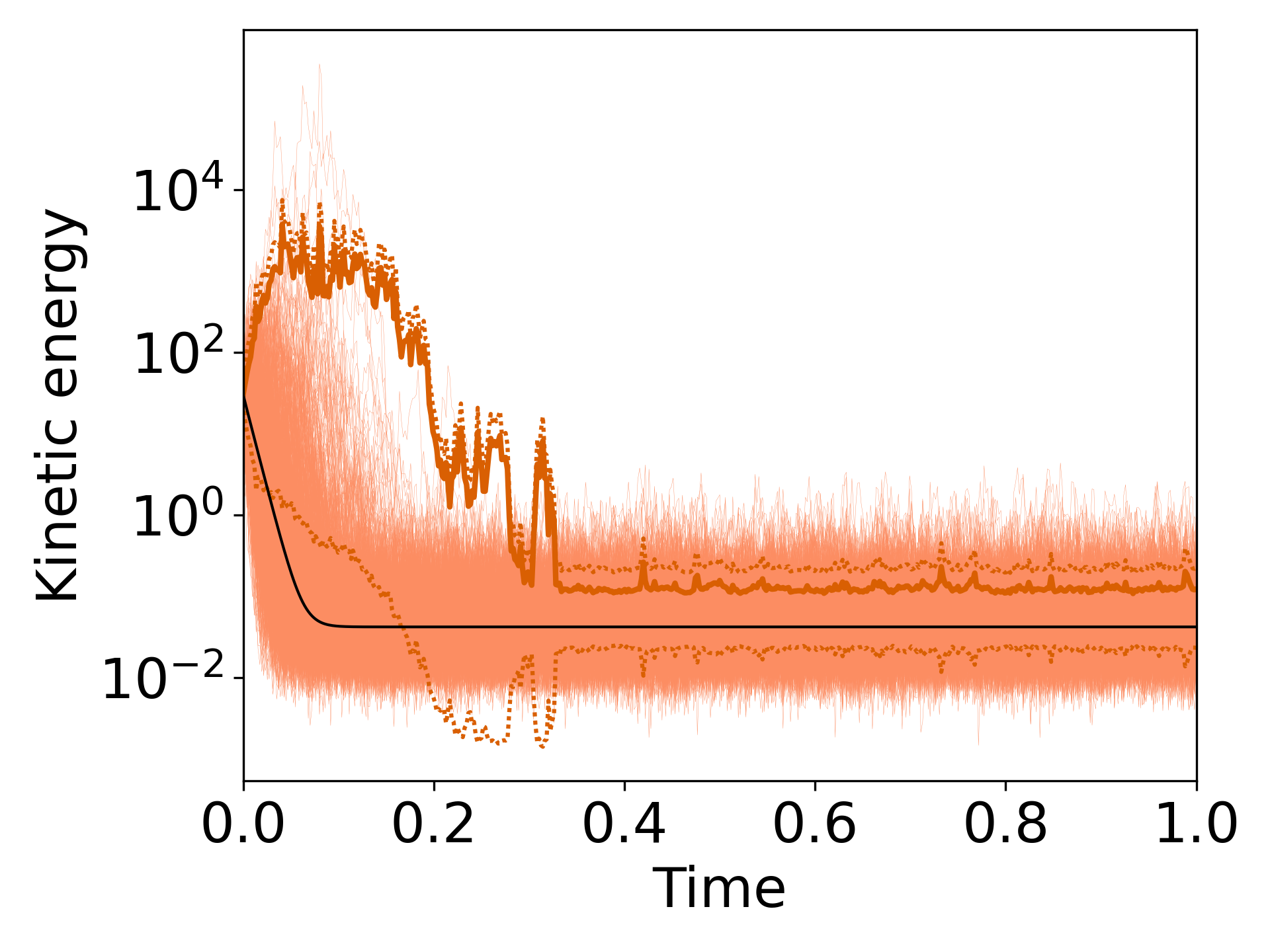}}%
\includegraphics[scale=0.425]{\grSrc{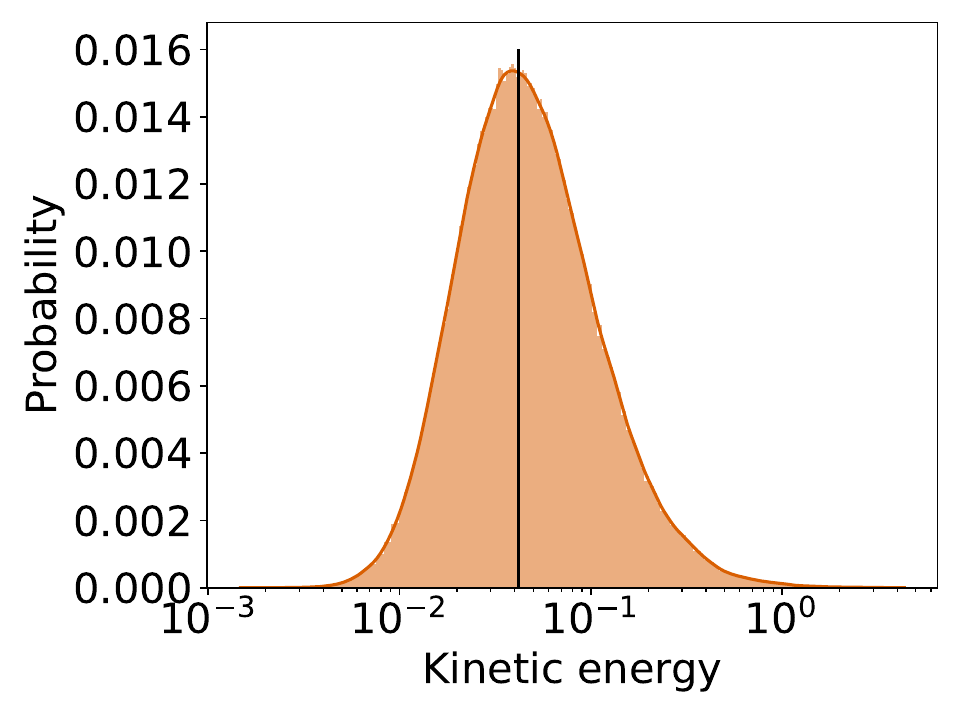}}
        \caption{Multiplicative noise}
\end{subfigure}
\begin{subfigure}[t]{0.99\textwidth} 
\includegraphics[scale=0.425]{\grSrc{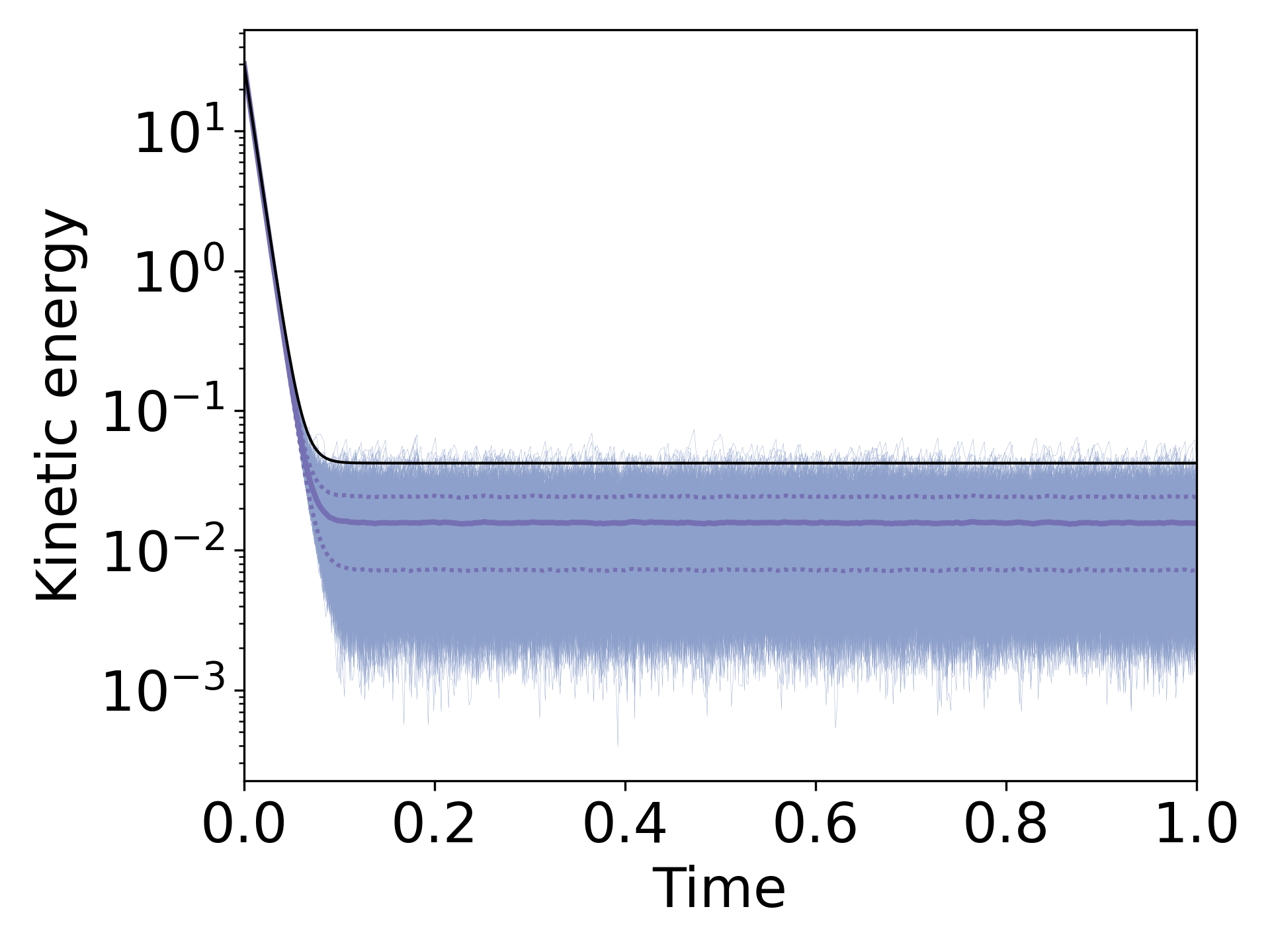}}%
\includegraphics[scale=0.425]{\grSrc{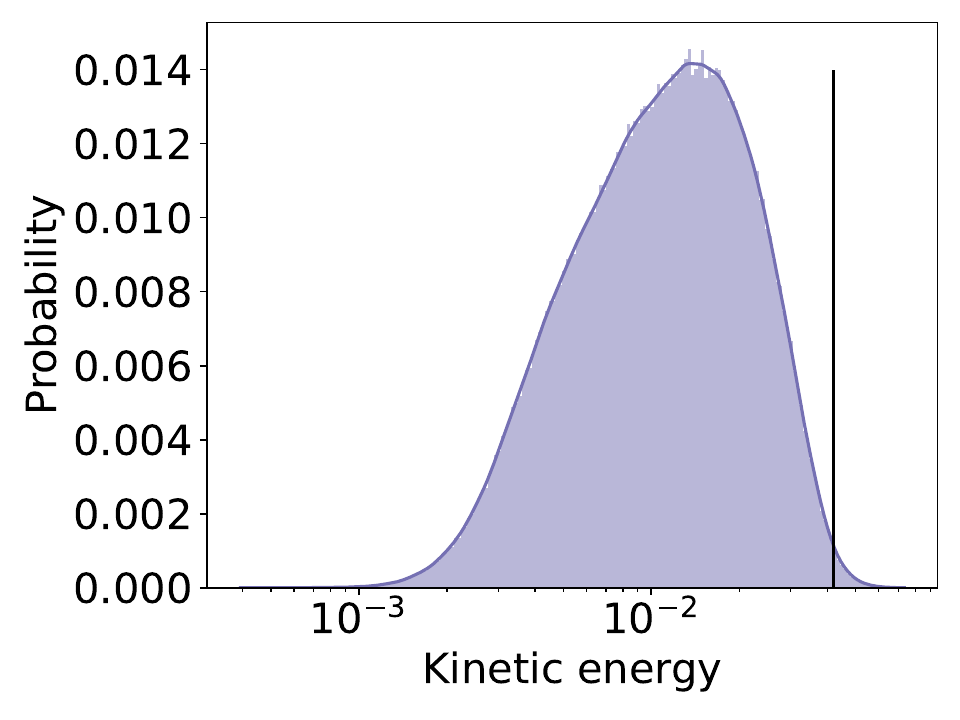}}
        \caption{Transport noise}
\end{subfigure}
\caption{Graphical illustration of the results of SOE--1: (left) time evolution of the kinetic energy for different noises. The deterministic kinetic energy evolution is shown in black. Thick lines and dotted lines show the mean energy and the mean energy plus or minus one standard deviation, respectively. The first 1,000 (out of 10,000) energy trajectories are shown in pale colours; (right) empirical approximation (based on 1,000 trajectories) of the stationary distributions of the kinetic energy for different noises. The deterministic stationary energy level is indicated by a black vertical line.}
 \label{fig:energy}
\end{figure} 

The deterministic system monotonically dissipates its kinetic energy until it reaches its stationary state, indicated by the constant energy level of around~$0.042$ at time~$0.13$. 

Transport noise leads to a lower mean kinetic energy compared to the deterministic energy: it is scaled by a factor of approximately~$1/3$. Initially, dissipation is dominant for transport noise as can be seen by the absence of fluctuations and small standard deviations. At the same time as the deterministic solution reaches its stationary state, randomness becomes more dominant (the standard deviation is approximately~$0.008$). Overall, the energy of trajectories rarely overcomes the deterministic stationary energy level. 

Multiplicative noise leads to a higher mean energy level compared to the deterministic one: it is scaled by a factor of approximately~$3$. We observe an initial mean energy peak, which is caused by rare trajectories with high energy levels. Large standard deviations indicate dominant stochastic behaviour for short times. After some time, the energy is dissipated and eventually reaches its stationary level at time~$0.32$. Its stationary distribution is almost symmetrically distributed (with respect to a logarithmic scale) around the deterministic stationary energy level. Even though reaching the stationary state, many fluctuations in the mean energy are still visible, which indicate sensitivity of the empirical approximation with respect to rare events. Its standard deviation is approximately~$0.1$.

Additive noise leads to a substantially higher mean energy compared to the no-noise case: it is scaled by a factor of approximately~$12$. Right from the start, dissipation dominants the energy dynamics. While the deterministic solution reaches its stationary state, randomness becomes more influential (the standard deviation is approximately~$0.4$). The density of the stationary distribution attains its maximum at the deterministic stationary energy level, which seems to be a lower bound for each trajectory's energy level. Even if they reach the energy level at an earlier time, they cannot surpass it. 

\subsubsection{SOE--2: Time convergence}
The results of the time convergence experiments are presented in Figure~\ref{fig:time-convergence}: it displays the time convergence of velocity, gradient of velocity, pressure, and time-integrated pressure. We observed the following: 

Velocity convergence on $L^2\big(\Omega;L^\infty(0,T; L^2(\mathcal{D}))\big)$ fails in all experiments but Experiment~1. In this case, interestingly, we also observe an accelerated convergence of rate~1 instead of the theoretically predicted rate~$0.5$. Experiments~1, 2 and~5 show velocity convergence on $L^2\big(\Omega; L^2(0,T; W^{1,2}_{0}(\mathcal{D}))\big)$ with rate~$0.6$, $0.2$ and~$0.33$, respectively. Initially, Experiment~1 reports velocity convergence with rate~$0.8$, which eventually reduces to~$0.4$. No convergence is detected in Experiments~3 and~4.

In all experiments, the pressure doesn't converge on $L^2\big(\Omega; L^2(0,T; L^2(\mathcal{D}))\big)$. Its convergence is restored on the weaker norm~$L^2\big(\Omega; W^{-1,2}(0,T; L^2(\mathcal{D}))\big)$ in all experiments but Experiment~4. Experiments~1, 2 and~5 report convergence with rate~$0.5$. Experiment~3 detects convergence with rate~$0.33$. 

\begin{figure}
\begin{subfigure}[t]{1.00\textwidth} 
\includegraphics[scale=0.9]{\grSrc{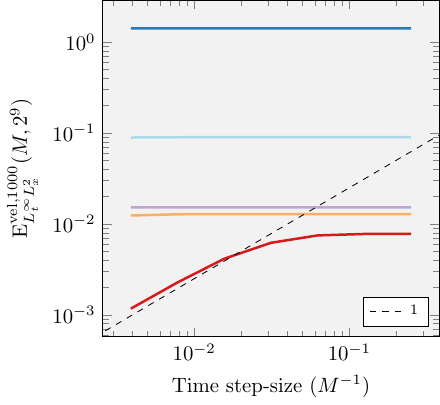}}%
\includegraphics[scale=0.9]{\grSrc{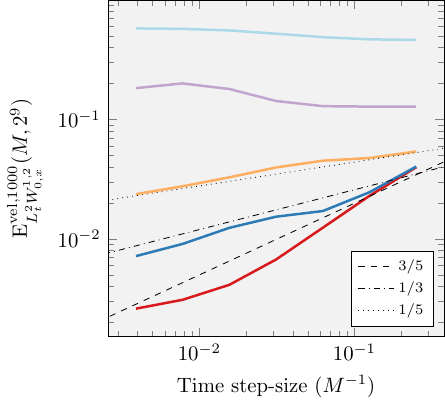}}
        \caption{Time convergence of velocity measured on $L^2_\omega L^\infty_t L^2_x$ (left) and $L^2_\omega L^2_t W^{1,2}_{0,x}$ (right).}
\end{subfigure}
\begin{subfigure}[t]{1.00\textwidth} 
\includegraphics[scale=0.9]{\grSrc{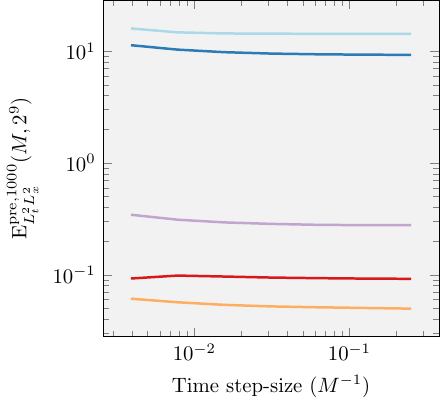}}%
\includegraphics[scale=0.9]{\grSrc{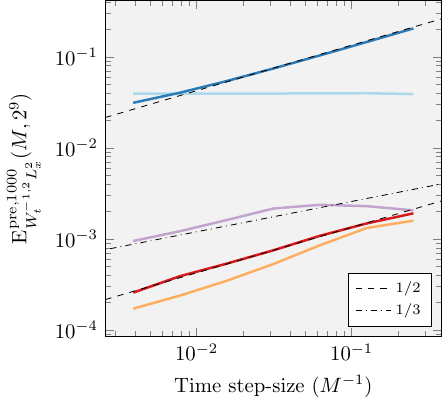}}
        \caption{Time convergence of pressure measured on $L^2_\omega L^2_t L^2_x$ (left) and $L^2_\omega W^{-1,2}_t L^2_x$ (right).}
\end{subfigure}
\caption{Graphical illustration of the results of SOE--2: time convergence of velocity and pressure. Colour encodes the experiments (Exp.~1:
{\protect\tikz \protect\draw[color=plot_color1, line width=1.2] (0,0) -- (0.5,0);},
Exp.~2:
{\protect\tikz \protect\draw[color=plot_color2, line width=1.2] (0,0) -- (0.5,0);},
Exp.~3:
{\protect\tikz \protect\draw[color=plot_color3, line width=1.2] (0,0) -- (0.5,0);},
Exp.~4:
{\protect\tikz \protect\draw[color=plot_color4, line width=1.2] (0,0) -- (0.5,0);},
Exp.~5:
{\protect\tikz \protect\draw[color=plot_color5, line width=1.2] (0,0) -- (0.5,0);}). The discontinuous lines are reference lines where the individual slopes are given in each figures' legend. 
 } 
 \label{fig:time-convergence}
\end{figure}

\subsection{Conclusions}
Different noises lead to different dynamics of the kinetic energy and different stationary distributions. While the dynamics in cases of transport and additive noise are controlled at all times by the energy dissipation, the multiplicative noise seems to be largely influenced by random effects and rare events; especially at short times. The densities of the stationary distributions show that additive noise exclusively leads to high energy levels (compared to the deterministic stationary energy level); multiplicative noise leads to fairly centred energy levels with occasional exceptions; and the energy of transport noise is mostly concentrated below the deterministic one. Depending on the modelling of the given situation it seems physically reasonable to choose a noise which does not affect the energy balance.
\footnote{In some complex situations such as weather forecast it might also be beneficial to consider a combination of additive and transport noise.} Since the energy is \emph{below} the deterministic one in the case of transport noise
there must be more dissipation. However, we currently do not understand the mechanism behind this. 

In the beginning of this section we asked: (a) can the torus be replaced by a Lipschitz domain, (b) can periodic boundary conditions be replaced by no-slip boundary conditions and (c) can constant transport fields be replaced by non-constant ones? Our numerical experiments provide an affirmative answer to these questions as long as the data (initial condition and noise datum) comply with the constraints, which are incompressibility and vanishing trace. In fact, our simulations even show that these data conditions are sharp: if initial velocity and noise datum satisfy the constraints, then velocity approximations converge with the theoretically predicted rate of Theorem~\ref{thm:3.1}.

Conversely, if either initial velocity or noise datum does not satisfy the constraints, then this has a decreasing effect on the convergence rate or it even destroys the convergence. 

Interestingly, our simulations don't provide numerical evidence for the necessity of Assumption~\eqref{ass:vis-dom-noise}. The transport field and viscosity, which we used in the time convergence experiments, violate this assumption by several orders of magnitude ($\Gamma_\bfsigma \approx 10^6 \gg 1 = \mu$). Still, we observe convergence, numerically. Therefore, we ask: is it possible to rigorously establish the convergence of our proposed algorithm without assuming~\eqref{ass:vis-dom-noise}?

In some experiments we observe convergence of pressure on $L^2\big(\Omega; W^{-1,2}(0,T; L^2(\mathcal{D}))\big)$. A theoretical explanation of this convergence has not been derived yet and is an interesting topic for future research. Only recently, related results for fully discrete algorithms for the stochastic Stokes system (no convection) forced by multiplicative noise have been derived by Feng, Vo and the third author~\cite{MR4286261}. In this case, they established an error estimate of pressure on $L^2\big(\Omega; W^{-1,2}(0,T; L^2(\mathcal{D}))\big)$ with rate~$0.5$, which coincides with our experimentally observed convergence rate. Whether their methods can also be applied for the Navier--Stokes equations remains to be investigated. From an analytical point of view, the regularity of the pressure is the most important prerequisite for the occurrence of pressure convergence. In~\cite{Wichmann2024}, the last author proved for generalized Stokes systems (the classical Stokes system is included) with multiplicative noise that pressure is more regular than $L^2\big(\Omega; W^{-1,2}(0,T; L^2(\mathcal{D}))\big)$: in fact, pressure almost belongs to the space $L^2\big(\Omega; W^{-1/2,2}(0,T; L^2(\mathcal{D}))\big)$. The differentiability gap between the spaces, in which the convergence is measured and the regularity is available, can be made arbitrarily close to~$0.5$. For specific algorithms, this gap can be used to verify the convergence of pressure, ultimately leading to a theoretically justified pressure convergence with rate~$0.5$ as done in~\cite{MR4286261}. In this article, we experimentally observe pressure convergence for almost all experiments in SOE--2. Therefore, it seems natural to ask: what are sufficient conditions for the initial condition and the noise coefficient such that pressure is regular?

\end{document}